\newcommand{\eqdef }{\overset{\mbox{\tiny{def}}}{=}}
\newcommand{\utilde}[1]{\underaccent{\tilde}{ 1}}
\newcommand{\secref}[1]{\S\ref{ 1}}
\newcommand{\dbar}[1]{\bar{\bar{ 1}}}
\def\norm 1{\big\Vert 1\big\Vert}
\def\seq 1{\left< 1\right>}\def\mseq 1{\left< \! 1\!\right>}
\def\sep 1{\left( 1\right)}\def\adf 1{\left[ 1\right]}
\theoremstyle{definition}
\newtheorem{theorem}{Theorem}
\newtheorem{lemma}[theorem]{Lemma}
\newtheorem{proposition}[theorem]{Proposition}
\newtheorem{remark}[theorem]{Remark}
\newtheorem{definition}[theorem]{Definition}
\numberwithin{equation}{section}
\numberwithin{theorem}{section}
\begin{document}

\keywords{Boltzmann Equation, Landau Equation, Collisional Kinetic Theory.}
\subjclass[2020]{Primary 35Q20}

\title[axisymmetric solutions to the homogeneous Landau equation]{On the axially symmetric solutions to the spatially homogeneous Landau equation}

\author[J. W. Jang]{Jin Woo Jang$^{\dagger}$
\orcidlink{0000-0002-3846-1983}}
\address{$^{\dagger}$Department of Mathematics, Pohang University of Science and Technology (POSTECH), Pohang, South Korea (37673). \href{mailto:jangjw@postech.ac.kr}{jangjw@postech.ac.kr}  (\orcidlink{0000-0002-3846-1983} \href{https://orcid.org/0000-0002-3846-1983}{https://orcid.org/0000-0002-3846-1983})}
\thanks{$^\dagger$Supported by the National Research Foundation of Korea (NRF) grant funded by the Korean government (MSIT) NRF-2022R1G1A1009044, RS-2023-00210484, RS-2023-00219980 and by the Basic Science Research Institute Fund of Korea NRF-2021R1A6A1A10042944}

\author[J. Kim]{Junha Kim $^{\ddagger}$
\orcidlink{0000-0002-5962-6353}}
\address{$^\ddagger$ Department of Mathematics, Ajou University, Suwon, Gyeonggi-do, South Korea (16499). \href{mailto:}{
junha02@ajou.ac.kr} (\orcidlink{0000-0002-5962-6353} \href{https://orcid.org/0000-0002-5962-6353}{https://orcid.org/0000-0002-5962-6353})}
\thanks{$^\ddagger$Supported by the National Research Foundation of Korea(NRF) grant funded by the Korea government(MSIT) (No. RS-2024-00360798).}

\begin{abstract}
In this paper, we consider the spatially homogeneous Landau equation, which is a variation of the Boltzmann equation in the grazing collision limit. For the Landau equation for hard potentials in the style of Desvillettes-Villani \cite{MR1737547,MR1737548}, we provide the proof of the existence of axisymmetric measure-valued solution for any axisymmetric $\mathcal{P}_p(\mathbb{R}^3)$ initial profile for any $p\ge 2$. Moreover, we prove that if the initial data is not a single Dirac mass, then the solution instantaneously becomes analytic for any time $t>0$ in the hard potential case.  
In the soft potential and the Maxwellian molecule cases, we show that there are no solutions whose support is contained in a fixed line even for any given line-concentrated data.
\end{abstract}

\thispagestyle{empty}

\maketitle

\section{Introduction}

The spatially homogeneous Landau equation is a fundamental model in kinetic theory, 
describing the evolution of particle distributions under grazing collisions. This paper focuses 
on proving the existence of weak solutions that exhibit axisymmetry when the initial data is 
axisymmetric.
For $t\in [0,T]$ and $v\in\mathbb{R}^3$ for some $T>0$, the spatially homogeneous Landau kinetic equation is given by
\begin{equation}\label{eq:L}
\partial_t f(t, v) = Q(f, f) = \partial_i \int_{\mathbb{R}^3} a_{ij}(v - v_*) \left( \partial_j f(v) f(v_*) - \partial_j f(v_*) f(v) \right) dv_*,
\end{equation}where we abbreviate $f(t,v)$ as $f(v)$.
Here, the kernel $a_{ij}$ is defined as:
\begin{equation}\label{def_a}
a_{ij}(z) = \left( \delta_{ij} - \frac{z_i z_j}{|z|^2} \right) |z|^{\gamma+2}, \quad \gamma \in [-3, 1],
\end{equation}with the standard Kronocker delta notation $\delta_{ij}=1\text{ if }i=j,\text{ and }=0,$ otherwise. We will call the case with $\gamma\in(0,1]$, $\gamma=0$, and $\gamma\in[-3,0)$ as the hard potential case, the Maxwellian moleculs, and the soft potential case, respectively. 

If we define $$b_i(z) := \partial_j a_{ij}(z) = -2z_i |z|^{\gamma}, \qquad c(z) := \partial_i \partial_j a_{ij}(z) = -2(\gamma+3)|z|^{\gamma}$$ and $$\overline{a_{ij}(v)} := a_{ij} * f, \qquad \overline{b_{i}(v)} := b_{i} * f, \qquad \overline{c(v)} := c * f,$$ we can rewrite the Landau equation \eqref{eq:L} as $$\partial_t f = \nabla \cdot (\overline{a} \nabla f - \overline{b} f) \qquad \mbox{or} \qquad \partial_t f = \overline{a_{ij}} \partial_i \partial_j f - \overline{c}f.$$ Note that $|a_{ij}(z)|\le |z|^{2+\gamma}$ and $|b_i(z)|\le |z|^{1+\gamma}.$

Our aim is to study the existence and regularity of axisymmetric weak solutions to \eqref{eq:L} for given initial data in probability measures on $\mathbb{R}^3$. 
\subsection{Notations}
In this paper, we denote the measure $f(t,v)dv$ as $f_t(dv)$. We first define the total density, the total energy, and the total entropy of the measure $f_t(dv)$ for some $t\geq0$ as 
\begin{equation*}
	M(f_t) := \int_{\mathbb{R}^3}  1f_t(dv), \qquad E(f_t) := \frac {1}{2} \int_{\mathbb{R}^3} |v|^2 f_t(dv),
\end{equation*} and 
\begin{equation*}
	\text{H}(f_t) := \int_{\mathbb{R}^3} f(t,v) \log f(t,v) \,\mathrm{d}v,
\end{equation*} respectively. 
Furthermore, we introduce the $s$-th moment
\begin{equation*}
	M_s(f_t) := \int_{\mathbb{R}^3} (1+|v|^2)^{\frac {s}{2}} |f(t,v)| \,\mathrm{d}v, \qquad s > 0.
\end{equation*}For each $t\ge 0,$ we define the following standard weighted function-space norms:
\begin{equation*}
	\| f_t \|_{L^p_s} := \left( \int_{\mathbb{R}^3} (1+|v|^2)^{\frac {s}{2}} |f(t,v)|^p \,\mathrm{d}v \right)^{\frac {1}{p}}, \qquad p\in [1,\infty), \,\,\,  s \ge 0,
\end{equation*}
\begin{equation*}
	\| f_t \|_{H^k_s} := \sum_{|\alpha|\le k}\left( \int_{\mathbb{R}^3} (1+|v|^2)^{\frac {s}{2}} |\nabla^\alpha_v f(t,v)|^2 \,\mathrm{d}v \right)^{\frac {1}{2}}, \qquad k \in \mathbb{N}, \,\,\,  s \ge 0.
\end{equation*} We also define the following H\"older space whose norm definition was introduced by \cite{MR181836,ladyzhenskaia1968linear}; define the H\"older space $\mathcal{H}^\ell([T_1, T_2] \times \Omega)$ ($T_2 > T_1 > 0, \, \Omega \text{ open set in } \mathbb{R}^3, \, \ell > 0, \, \ell \notin \mathbb{N}$), whose norm is given by 
\begin{multline}\label{holder def}\|f\|_{\mathcal{H}^\ell} = \sup_{T_1 < t < T_2, \, v \in \Omega} \sum_{|\alpha| + 2r \leq \lfloor \ell \rfloor} |\partial_t^r \partial_v^\alpha f(t, v)| \\ + \sup_{T_1 < t < T_2, \, v \neq w} \sum_{|\alpha| + 2r = \lfloor \ell \rfloor} \frac{|\partial_t^r \partial_v^\alpha f(t, v) - \partial_t^r \partial_v^\alpha f(t, w)|}{|v - w|^{\ell - \lfloor \ell \rfloor}} \\ + \sup_{s \neq t, \, v \in \Omega} \sum_{|\alpha| + 2r = \lfloor \ell \rfloor} \frac{|\partial_t^r \partial_v^\alpha f(s, v) - \partial_t^r \partial_v^\alpha f(t, v)|}{|t - s|^{(\ell - \lfloor \ell \rfloor)/2}}, \end{multline}where $\lfloor \ell \rfloor$ denotes the integer part of $\ell$ and $\alpha \in \mathbb{N}^3$.
We also define the mixed-H\"older norm $\mathcal{H}^{l_1,l_2}_{t,x}([T_1, T_2] \times \Omega):=\mathcal{H}^{l_1}_t([T_1,T_2];\mathcal{H}_x(\Omega))$ in the same spirit.

Throughout the paper, we will consider a measure-valued solution to the Landau equation. Thus we first introduce the following space of probability measures; let $\mathcal{P}(\mathbb{R}^3)$ be the set of probability measures on $\mathbb{R}^3$ 
, and let $$\mathcal{P}_p(\mathbb{R}^3) := \left\{ f \in \mathcal{P}(\mathbb{R}^3) ; \int_{\mathbb{R}^3} |v|^p f(dv) < \infty \right\},$$ for $p>0$. For $p\ge 1, $ it is endowed with the $p$-Wasserstein distance, defined as $$\mathcal{W}_p(\mu,\nu)\eqdef \left(\inf_{\gamma\in \Pi(\mu,\nu)}\int_{\mathbb{R}^3\times\mathbb{R}^3}\text{dist}(x,y)^p d\gamma(x,y)\right)^{\frac{1}{p}}.$$ For $p\in(0,1)$, one can still define $\mathcal{W}_p$ by removing the power $1/p$ from the infimum.

Here we introduce our notion of measure-valued weak solutions in the hard potential case.
\begin{definition}\label{weak def}
Let $\gamma \in (0,1]$. We say $f(t)$ is a weak solution to the Landau equation if
\begin{equation*}
	f \in L^{\infty}_{loc}((0,\infty);\mathcal{P}_{2}(\mathbb{R}^3)) \cap L^1_{loc}((0,\infty);\mathcal{P}_{2+\gamma}(\mathbb{R}^3)),
\end{equation*}
\begin{equation*}
	\frac {1}{2} \int_{\mathbb{R}^3} |v|^2 f_t(dv) \,\mathrm{d}v \leq \frac {1}{2} \int_{\mathbb{R}^3} |v|^2 f_0(dv), \qquad t \geq 0,
\end{equation*} and 
\begin{multline*}
		\int_{\mathbb{R}^3} \varphi(v) f_T(dv) - \int_{\mathbb{R}^3} \varphi(v) f_0(dv) 
		= \int_0^T \int_{\mathbb{R}^3} \int_{\mathbb{R}^3} a_{ij}(v-v_*) \partial_i \partial_j \varphi(v) f_t(dv_*) f_t(dv) \mathrm{d}t \\
		+ 2 \int_0^T \int_{\mathbb{R}^3} \int_{\mathbb{R}^3} b_{i}(v-v_*) \partial_i \varphi(v) f_t(dv_*) f_t(dv) \mathrm{d}t
	\end{multline*} for all $\varphi \in C^2_{b}(\mathbb{R}^3)$ and $T \geq 0$. 
\end{definition}
In addition, for the soft potential case with $\gamma<-1,$ we define the space
\begin{equation*}
    \mathcal{J}_{\alpha} := \left\{ f \in \mathcal{P}(\mathbb{R}^3) : \sup_{v \in \mathbb{R}^3} \int_{\mathbb{R}^3} |v-v_*|^{\alpha} f(dv_*) < \infty \right\},
\end{equation*}with $\alpha \in (-3,0)$, and in the definition of the weak solution above, we additionally require that $f \in L^1_{loc}((0,\infty);\mathcal{J}_{\gamma+1})$ for $\gamma \in [-3,-1)$. This additional condition for the soft potentials is needed to make the weak formula well-defined. 

It is well-known that the Landau equation possesses special stationary solutions named the Maxwellian equilibria; for a constant local temperature $T>0$, define \begin{equation*}
    M_T(v) = (2\pi T)^{-\frac{3}{2}}e^{-\frac{|v|^2}{2T}}, \qquad T>0.
\end{equation*}
A Dirac mass $\delta_0$, the limit case of $M_T(v)$ with $T=0$, is also a unique weak solution to \eqref{eq:L} in the distribution sense.

\subsection{Previous Results}
The study of the spatially homogeneous Landau equation has seen significant progress in the last few decades. The Landau equation is an approximated model of the classical Boltzmann equation without angular cutoff. It has been shown in the spatially homogeneous case that the Boltzmann collision operator converges to the Landau operator in the grazing collision limit for the Coulombic interaction. Here we introduce some convergence results from the Boltzmann equation for inverse-power-law potentials to the Landau equation in the grazing collision limit: 
\begin{itemize}
	\item Degond and Lucquin \cite{MR1167768} studied the limiting process involved, called the \textit{asymptotics of grazing collisions}.
	\item Arsen’ev and Buryak \cite{MR1055522} proved the convergence of solutions of the Boltzmann equation towards solutions of the Landau equation when grazing collisions prevail. 
	\item Desvillettes \cite{MR1165528} gave a mathematical framework for more physical situations, but excluding the main case of Coulomb potential.
	\item Villani \cite{MR1650006} derived rigorously the Landau equation for the Coulomb potential
    \item Also, for more recent results, see \cite{MR4706027,MR3505177,MR3180973}. We also mention a recent work on the weak convergence of the quantum Boltzmann equation to the Landau-Fokker-Planck equation \cite{MR4287184} in the semi-classical limit. See \cite{he2024semi} for the asymptotic expansion.
\end{itemize}

Regarding the existence theory for the spatially homogeneous Landau equation, we briefly mention the works \cite{MR1737547,MR1737548,FOURNIER20211961} and \cite{MR1646502} for hard potentials and the Maxwellian molecule case, respectively. For moderately soft potentials with $\gamma \in [-2,0),$ we introduce the works \cite{MR3884792,MR3158719,MR3582250,MR2502525} on the global wellposedness theory. For very soft potentials, it has been a long open problem to prove the global wellposedness due to high singularity of the reaction term. For the case with $\gamma>-3,$ Carrillo-Delgadino-Desvillettes-Wu \cite{MR4746872} proposed the gradient flow perspective for the Landau equation by constructing a non-local metric with respect to which the Landau equation can be viewed as a gradient flow. Very recently, Guillen and Silvestre \cite{2311.09420} proved the global existence of a unique classical solution by means of the Fisher information; see Theorem \ref{GSglobal}. See also other contributions \cite{MR3614751,MR4735821}. For the isotropic Landau equation in the style of Krieger-Strain \cite{MR2901061}, we introduce  the construction of  long-time radial solutions by Gualdani-Guillen \cite{MR3599518}. We also list the results on the spatially homogeneous relativistic Landau equation \cite{MR4042221,MR3959729}.

Regarding the spatially inhomogeneous Landau equation, the papers \cite{MR1278244,MR1392006} introduce solutions in the renormalized sense; we mention the construction of the renormalized solution with a defect measure \cite{MR2037247}. In addition, there have been concrete development on the theory on the global wellposedness and regularity of classical or weak solutions nearby the Maxwellian or the vacuum equilibria and its asymptotic stability. These have been established for both Cauchy problems in the whole space and the boundary-value problems in bounded domains with additional self-consistent fields effects. See \cite{1905.00173,MR2904573,Guo-L,KGH,MR3625186,MR3948345,MR2100057,MR2366140,MR2209761,MR4514055,2401.00554,MR4163828,MR2921603,MR4706027,MR4568409,MR4230064,MR4212191,MR4050580,MR3101794,MR2506070,MR4562582,MR3778645,MR4072211,MR4378091}.

\subsection{Conservation laws}
Formally, we can see from the equation that
	\begin{equation*}
	\begin{aligned}
		\frac {d}{dt} \int_{\mathbb{R}^3} f(v) \,\mathrm{d}v = \frac {d}{dt} \int_{\mathbb{R}^3} vf(v) \,\mathrm{d}v = 0.
	\end{aligned}
\end{equation*} In addition, we can also have
	\begin{equation}\label{energy con}
	\begin{aligned}
		\frac {d}{dt} \frac {1}{2} \int_{\mathbb{R}^3} |v|^2f(v) \,\mathrm{d}v &= \frac {1}{2} \int_{\mathbb{R}^3} \partial_i \left( \overline{a_{ij}(v)} \partial_j f(v) - \overline{b_i(v)} f(v) \right)|v|^2 \mathrm{d}v \\
		&= \int_{\mathbb{R}^3} \overline{a_{ij}(v)} \delta_{ij} f(v) \,\mathrm{d}v + 2 \int_{\mathbb{R}^3} \overline{b_i(v)} f(v) v_i \mathrm{d}v \\
		&=0,
	\end{aligned}
\end{equation} because from $b_i(v-v_*) = -b_i(v_*-v)$ and $a_{ii}(z) = -z_i b_i(z) = 2|z|^{2+\gamma}$, and
\begin{equation*}
	\begin{aligned}
	2 \int_{\mathbb{R}^3} \overline{b_i(v)} f(v) v_i \mathrm{d}v &= \int_{\mathbb{R}^3} \int_{\mathbb{R}^3} (v-v_*)_i b_i(v-v_*)f(v_*)f(v) \,\mathrm{d}v_* \mathrm{d}v \\
		&= - \int_{\mathbb{R}^3} \left( \int_{\mathbb{R}^3} a_{ii}(v-v_*) f(v_*) \,\mathrm{d}v_*  \right) f(v) \,\mathrm{d}v.
	\end{aligned}
\end{equation*}

\subsection{Further properties of the Landau collision operator for hard potentials}We now focus on the hard potential case $(\gamma \in (0,1]$). In this subsection, we introduce two well-known properties of the Landau equation for hard potentials: gain of moments and ellipticity. 
\subsubsection{Gain of moments}
Here we introduce a theorem on the gain of moments for the hard potential case due to Desvillettes and Villani \cite{MR1737547}:
\begin{theorem}[Theorems 3 and 6 of \cite{MR1737547}]\label{Villani theorem}Let $f$ be any weak solution of the Landau equation \eqref{eq:L} with initial datum $f_{in}\in L^1_2(\mathbb{R}^3)$, satisfying the decay of energy 
$$E(f(t,\cdot))\le E(f_{in}(\cdot)),\text{ for }t\ge 0.$$
Then \begin{enumerate}
    \item For all $s>0,$ if $M_s(f_{in})<+\infty,$ then $\sup_{t\ge 0}M_s(f(t,\cdot))<+\infty,$ and for all $T>0,$ $$\int_0^TM_{s+\gamma}(f(t,\cdot))dt<+\infty.$$
    \item For all time $t_0>0,$ and all number $s>0,$ there exists a constant $C_{t_0}>0 $, depending only on $M_{in},$ $E_{in}$, and $t_0,$ such that for all time $t\ge t_0,$ $$M_s(f(t,\cdot))\le C_{t_0}.$$
    \item For any $t\ge 0,$ $E(f(t,\cdot))=E_{in}:$ the energy is automatically conserved.
    		\item If $f_{in}\in L^1_{2+\delta}$ for some $\delta>0$, there exists a weak solution starting from $f_{in}$.
		\item If $f_{in}\in L^1_{2+\delta}$ for some $\delta>0$ and $f_{in}$ is not concentrated on a line, there exists a weak solution starting from $f_{in}$ such that for any $t_0 > 0$, $\text{H}(f(t)) < \infty$ for all $t > t_0$ and 
		\begin{equation}\label{reg}
	\begin{aligned}
		\sup_{t \geq t_0} \| f(t) \|_{H^k_s} < \infty, \qquad k \in \mathbb{N}, \,\,\, s \geq 0.
	\end{aligned}
\end{equation}
\end{enumerate}

\end{theorem}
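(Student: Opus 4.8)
The proof has four ingredients: moment bounds (giving (1)--(2)), energy conservation as their corollary ((3)), a compactness scheme for existence ((4)), and a parabolic-regularization argument for the smoothing statement ((5)). For the moments, the plan is to use $\varphi=\langle v\rangle^{s}$ in the weak formulation of Definition~\ref{weak def} --- rigorously through a cutoff $\langle v\rangle^{s}\chi(v/R)$, or by running the estimate on smooth approximate solutions and passing to the limit. The key computation is that, with $z=v-v_{*}$,
\begin{equation*}
a_{ij}(z)\,\partial_i\partial_j\langle v\rangle^{s}+2\,b_i(z)\,\partial_i\langle v\rangle^{s}= -\kappa_s\,|z|^{\gamma}\langle v\rangle^{s}+(\text{lower order terms}),
\end{equation*}
$\kappa_s>0$, where the lower-order terms, integrated against $f_t(dv_{*})$ and treated by Young's inequality, are bounded by $\tfrac12\kappa_s M_{s+\gamma}(f_t)+C(s,M_0,E_{in})\,M_s(f_t)$. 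Hence $\tfrac{d}{dt}M_s(f_t)\le-\tfrac12\kappa_s M_{s+\gamma}(f_t)+C\,M_s(f_t)$, and since $M_0(f_t)=1$ the interpolation $M_{s+\gamma}\ge M_s^{1+\gamma/s}$ gives $\tfrac{d}{dt}M_s(f_t)\le-c\,M_s(f_t)^{1+\gamma/s}+C\,M_s(f_t)$. The super-linear loss term (available because $\gamma>0$) forces $M_s(f_t)\le\max\{M_s(f_{in}),(C/c)^{s/\gamma}\}$ when $M_s(f_{in})<\infty$, which is (1), and a Bernoulli-type comparison gives $M_s(f_t)\le C(1+t^{-s/\gamma})$ \emph{independently} of $M_s(f_{in})$; to obtain this for \emph{all} $s$ from only $M_2(f_{in})<\infty$ one iterates: integrating the differential inequality yields $\int_0^T M_{s+\gamma}(f_t)\,dt<\infty$, hence $M_{s+\gamma}(f_{t_1})<\infty$ for some arbitrarily small $t_1$, and restarting the solution at $t_1$ advances $s\mapsto s+\gamma$, so finitely many steps give (2). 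The time-integral bound in (1) is one further integration.

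\textbf{Energy conservation and existence.} Once $M_{2+\gamma}(f_t)<\infty$ for $t>0$, the symmetrized double integral in \eqref{energy con} is absolutely convergent and vanishes, so testing the weak formulation against a smooth approximation of $|v|^{2}$ gives $\tfrac{d}{dt}E(f_t)=0$ on $(0,\infty)$; together with weak-$*$ lower semicontinuity of the energy at $t=0$ and the assumed energy decay this forces $E(f_t)=E_{in}$ for all $t\ge0$, proving (3). For (4) and the construction needed in (5), mollify $f_{in}$ into $f^{\varepsilon}_{in}\in C^{\infty}_{c}(\mathbb{R}^3)$ of unit mass with $M_{2+\delta}(f^{\varepsilon}_{in})\to M_{2+\delta}(f_{in})$; standard theory for smooth, rapidly decaying data supplies smooth global solutions $f^{\varepsilon}$, and the moment bounds just obtained are uniform in $\varepsilon$ since they depend only on mass and energy. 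This gives tightness in $\mathcal{P}_2(\mathbb{R}^3)$ locally uniformly in $t$, so $f^{\varepsilon}_t\rightharpoonup f_t$ along a subsequence; the uniform $(2+\delta)$-moment bound furnishes the uniform integrability of $|v-v_{*}|^{2+\gamma}$ that lets us pass to the limit in both nonlinear terms, so $f$ is a weak solution with the asserted moment and energy properties.

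\textbf{Ellipticity from non-concentration.} The core of (5) is the lemma: if $g\in\mathcal{P}_2(\mathbb{R}^3)$ has mass $1$ and is not concentrated on a line, then $\overline{a}(v):=(a*g)(v)$ is uniformly elliptic on compact sets. Indeed, for a unit vector $\xi$,
\begin{equation*}
\overline{a}(v)\,\xi\cdot\xi=\int_{\mathbb{R}^3}|v-v_{*}|^{\gamma}\,\bigl|v-v_{*}-((v-v_{*})\cdot\xi)\,\xi\bigr|^{2}\,g(dv_{*}),
\end{equation*}
and were $\inf_{|v|\le R,\ |\xi|=1}$ of the right-hand side equal to $0$, picking $(v^{n},\xi^{n})\to(v_{\infty},\xi_{\infty})$ along which the infimum is approached and applying Fatou's lemma would force $g$ to be supported on the line $v_{\infty}+\mathbb{R}\xi_{\infty}$, a contradiction; for large $|v|$ the bound only improves because $\gamma>0$, while the moment bounds give the matching upper bound $\overline{a}(v)\preceq\Lambda\langle v\rangle^{\gamma}\,\mathrm{Id}$.

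\textbf{Parabolic bootstrap and the main obstacle.} On the approximations $f^{\varepsilon}$ --- with the $f^{\varepsilon}_{in}$ chosen uniformly not concentrated on a line --- the equation is $\partial_t f^{\varepsilon}=\overline{a_{ij}}\,\partial_i\partial_j f^{\varepsilon}-\overline{c}\,f^{\varepsilon}$ with coefficients that, by the last two paragraphs, are uniformly parabolic and bounded on $[t_0,\infty)\times\{|v|\le R\}$; De Giorgi--Nash--Moser then gives uniform local $L^{\infty}$ and Hölder bounds, after which the convolution structure $\overline{a}=a*f^{\varepsilon}$, $\overline{c}=c*f^{\varepsilon}$ feeds a Schauder bootstrap to $C^{\infty}$, and combining with the moment bounds yields $\mathrm{H}(f^{\varepsilon}_t)<\infty$ and the weighted Sobolev estimates \eqref{reg} uniformly on $[t_0,\infty)$, all of which pass to the limit $\varepsilon\to0$. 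The genuine difficulty, and the step needing the most care, is to ensure the non-concentration constant does not degenerate as $t$ grows --- so that the ellipticity of $a*f^{\varepsilon}_t$ holds on an entire half-line, not just for an instant. This is handled by an entropy-production coercivity estimate (which needs only a robust, non-quantitative non-degeneracy and yields $\sqrt{f^{\varepsilon}_t}\in H^{1}_{\mathrm{loc}}$ for a.e.\ $t$, whence $f^{\varepsilon}_t$ is a genuine function and charges no line) together with a continuity argument excluding a first time of line-concentration, the short initial interval $[0,t_1]$ losing non-concentration only mildly --- enough to launch the bootstrap. Propagation in time (rather than mere appearance) of the $L^1_s$ and $H^k_s$ bounds then follows by combining these parabolic estimates with the moment bounds of the first step.
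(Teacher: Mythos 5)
Your proposal follows essentially the same route as the paper (and the Desvillettes--Villani reference it cites and partially sketches in Appendix~\ref{app.gain of M}): moment estimates via testing with $\langle v\rangle^s$, the elementary inequality and interpolation reducing to a Bernoulli-type ODE $\dot y\le -cy^{1+\gamma/s}+Cy$ to get propagation and appearance of moments, energy conservation as a consequence of appearance of the $(2+\gamma)$-moment together with lower semicontinuity at $t=0$, mollified-data approximation with uniform moment bounds for existence, and ellipticity of $\overline{a}$ feeding a parabolic bootstrap for smoothing. The only notable stylistic deviation is your ellipticity lemma, argued by compactness and Fatou rather than the three-balls construction of \cite[Lemma~9]{MR1737547}; this is a clean alternative, though as you acknowledge the real burden (which DV resolve by first obtaining finite entropy on a short interval and then invoking Proposition~\ref{prop.1.3}, whose constant depends only on mass, energy and entropy and is therefore uniform in $t$) is making that ellipticity quantitatively uniform on $[t_0,\infty)$, and your sketch of the entropy-coercivity and first-time-of-concentration argument is the step where more detail would be needed to be fully rigorous.
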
The sketch of the proof is in Appendix \ref{app.gain of M}. 
 \subsubsection{Ellipticity of $\overline{a_{ij}(v)}$} In the same paper \cite{MR1737547}, it is also known the following ellipticity property of the Landau collision operator: 
 \begin{proposition}[Proposition 4 of \cite{MR1737547}]\label{prop.1.3}
     Let $\gamma \in (0,1]$. Let $T>0$ and $f \in L^{\infty}(0,T;L^1_2 \cap L \log L(\mathbb{R}^3))$ with $M(f(t)) = M(f_0)$, $E(f(t)) \leq E(f_0)$, and $\text{H}(f(t)) \leq \text{H}(f_0)$. Then, there exists a constant $K>0$ depending on $\gamma$, $M_0$, $E_0$, and $\text{H}_0$ such that $$\overline{a_{ij}(v)} \xi_i \xi_j \geq K(1+|v|^2)^{\frac {\gamma}{2}} |\xi|^2, \qquad v,\xi \in \mathbb{R}^3.$$
 \end{proposition}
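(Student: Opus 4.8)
The plan is to fix $v$ and $\xi$, reduce to $|\xi|=1$ by homogeneity, and start from the algebraic identity $a_{ij}(z)\xi_i\xi_j = |z|^\gamma\bigl(|z|^2-(z\cdot\xi)^2\bigr) = |z|^\gamma\,|z-(z\cdot\xi)\xi|^2$ (valid for $|\xi|=1$), so that
\[
\overline{a_{ij}(v)}\,\xi_i\xi_j=\int_{\mathbb{R}^3}|v-v_*|^\gamma\,\mathrm{dist}(v_*,L_v)^2\,f(t,v_*)\,dv_*,\qquad L_v:=\{v+s\xi:s\in\mathbb{R}\}.
\]
The integrand is nonnegative and vanishes precisely when $v_*\in L_v$ (and, because $\gamma>0$, is additionally small when $v_*$ is close to $v$); thus the entire content of the estimate is that a density of finite entropy cannot concentrate near a line, and this is where the $L\log L$ bound enters.

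Accordingly, I would first establish the standard auxiliary fact that, from $\int f(t,\cdot)=M_0$, $E(f(t))\le E_0$ and $H(f(t))\le H_0$, there is a nondecreasing modulus $\eta=\eta_{M_0,E_0,H_0}$ with $\eta(s)\to0$ as $s\to0^+$ such that $\int_A f(t,v_*)\,dv_*\le\eta(|A|)$ for every measurable $A\subset\mathbb{R}^3$, uniformly in $t\in[0,T]$. This follows by splitting $A$ into $\{f\le\lambda\}$ and $\{f>\lambda\}$, bounding $\int_{\{f>\lambda\}}f\le(\log\lambda)^{-1}\int_{\{f>1\}}f\log f$ for $\lambda>1$, and controlling $\int_{\{f>1\}}f\log f\le H_0+1+2\pi E_0$ via the pointwise inequality $f\log(1/f)\le e^{-\pi|v_*|^2}+\pi|v_*|^2 f$ on $\{f\le1\}$, then optimizing in $\lambda$. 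I would also record, via Chebyshev, that the \emph{fixed} ball $B_{R_0}$ with $R_0:=2\sqrt{E_0/M_0}$ satisfies $\int_{B_{R_0}}f(t,v_*)\,dv_*\ge M_0/2$ for all $t$.

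Next I would split into two regimes. For $|v|\le R_*:=\max\{2R_0,1\}$ I localize to $B_{R_0}$: the \emph{bad} set $\mathcal{B}_\rho:=\{|v-v_*|<\rho\}\cup\{\mathrm{dist}(v_*,L_v)<\rho\}$ meets $B_{R_0}$ in volume $\le C(R_0)\rho^2$ for $\rho\le1$, so for $\rho_0$ small enough (depending only on the constants) its $f$-mass is $\le M_0/4$; on the complementary mass $\ge M_0/4$ the integrand is $\ge\rho_0^{2+\gamma}$, giving $\overline{a_{ij}(v)}\xi_i\xi_j\ge\tfrac{M_0}{4}\rho_0^{2+\gamma}$, which is $\ge K_1(1+|v|^2)^{\gamma/2}$ since $(1+|v|^2)^{\gamma/2}\le(1+R_*^2)^{\gamma/2}$ here. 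For $|v|>R_*$, every $v_*\in B_{R_0}$ has $|v-v_*|\ge|v|/2$, hence
\[
\overline{a_{ij}(v)}\xi_i\xi_j\ge\Bigl(\tfrac{|v|}{2}\Bigr)^\gamma\int_{B_{R_0}}\mathrm{dist}(v_*,L_v)^2\,f(t,v_*)\,dv_*,
\]
and since $\{v_*\in B_{R_0}:\mathrm{dist}(v_*,L_v)<\rho\}$ has volume $\le2\pi R_0\rho^2$ no matter where $L_v$ lies, choosing $\rho_1$ small gives $\int_{B_{R_0}}\mathrm{dist}(v_*,L_v)^2 f\,dv_*\ge\rho_1^2 M_0/4$; using $|v|>1$ this yields $\overline{a_{ij}(v)}\xi_i\xi_j\ge K_2(1+|v|^2)^{\gamma/2}$. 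Taking $K:=\min\{K_1,K_2\}$ finishes the proof, and $K$ depends only on $\gamma,M_0,E_0,H_0$ because $M(f(t))=M_0$, $E(f(t))\le E_0$, $H(f(t))\le H_0$ hold uniformly in $t$.

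I expect the only genuinely nontrivial ingredient to be the quantitative no-concentration estimate above: the qualitative statement that a finite-entropy density gives no mass to sets of measure zero is classical, but one needs a uniform modulus $\eta$ in terms of $(M_0,E_0,H_0)$. Beyond that, the difficulty is organizational — in the large-$|v|$ regime the gain $|v-v_*|^\gamma\gtrsim|v|^\gamma$ and the gain $\mathrm{dist}(v_*,L_v)^2\gtrsim1$ must be harvested on the \emph{same} set of $v_*$, which is why one localizes to the fixed ball $B_{R_0}$ carrying half the mass regardless of $v$ (a $v$-dependent ball would degrade the exponent).
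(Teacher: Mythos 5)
Your argument is correct and is essentially the Desvillettes--Villani proof of Proposition~4 in \cite{MR1737547}, which the paper cites without reproving: localize to a fixed ball $B_{R_0}$ carrying at least half the mass, use the entropy-plus-energy no-concentration modulus to show that a thin neighborhood of the line $L_v=v+\mathbb{R}\xi$ cannot absorb that mass, and harvest the resulting lower bound on $|v-v_*|^\gamma\,\mathrm{dist}(v_*,L_v)^2$ on the remaining mass, treating $|v|$ small and large separately. The only cosmetic deviation is that you excise a cylinder of fixed radius $\rho$ around $L_v$ whereas Desvillettes--Villani excise a thin cone $D_{\theta,\xi}(v)$ of fixed opening angle; since $a_{ij}(z)\xi_i\xi_j=|z|^{2+\gamma}\sin^2\theta=|z|^\gamma\,\mathrm{dist}(v_*,L_v)^2$ for $|\xi|=1$, both excisions give the same $O(\rho^2)$ volume bound inside $B_{R_0}$ and the two formulations are interchangeable.
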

	Regarding this proposition, we make the following remark:
	\begin{itemize}
	\item This ellipticity estimate is stronger than the uniform ellipticity. In the current paper, we will consider an approximated Landau equation with a regularized collision operator with $\overline{a_{ij}(v)}^{\varepsilon} = (a_{ij}(v) \mu_\varepsilon(v)) * f$, where $\mu_\varepsilon(v) \sim |v|^{-2-\gamma}$ when $|v| \sim \infty$. In this case, we only have the uniform ellipticity estimate without the gain of moments of order $\gamma$.
	\item Regarding the counterpart of the soft potential case with $\gamma\in(-3,0), $ some a priori estimates for the soft potential are obtained by Alexandre, Lin, and Liao \cite[Proposition 2.1]{MR3375485}.
	\end{itemize}It can be also shown that the entropy condition $f\in L^\infty(0,T;L\log L(\mathbb{R}^3))$ is not essential. Namely, Desvillettes and Villani \cite{MR1737547} considered the following infinite entropy case in the same paper as well.
	\begin{lemma}[Lemma 9 of \cite{MR1737547}]\label{lem.elliptic.infinite}
	    Let $T>0$ and $f$ be a non-zero non-negative function $f$ in $ L^\infty([0,T];L^1_{2}(\mathbb{R}^3))\cap C([0,T];W^{-2,1}(\mathbb{R}^3))$ with $f_{in}\in L^1_2(\mathbb{R}^3)$. Then, there exist constants $\Delta t \in (0,T]$ and $K>0$ such that $$\overline{a_{ij}(v)} \xi_i \xi_j \geq K(1+|v|^2)^{\frac {\gamma}{2}} |\xi|^2, \qquad v,\xi \in \mathbb{R}^3, \,\, t \in [0,\Delta t].$$
	\end{lemma}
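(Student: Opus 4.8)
The plan is to adapt the proof of the ellipticity estimate of Proposition~\ref{prop.1.3}, replacing the entropy bound by the mere fact that $f_{in}\in L^1$ together with the continuity of $t\mapsto f(t,\cdot)$ in $W^{-2,1}$. By homogeneity in $\xi$ we may assume $|\xi|=1$. A direct computation from \eqref{def_a}, with $z=v-v_*$, gives
\[
a_{ij}(z)\xi_i\xi_j=|z|^{\gamma}\bigl(|z|^2-(z\cdot\xi)^2\bigr)=|z|^{\gamma}\,\mathrm{dist}(z,\mathbb{R}\xi)^2 ,
\]
so that, writing $L=L(v,\xi):=v+\mathbb{R}\xi$ for the line through $v$ with direction $\xi$,
\[
\overline{a_{ij}(v)}\,\xi_i\xi_j=\int_{\mathbb{R}^3}|v-v_*|^{\gamma}\,\mathrm{dist}(v_*,L)^2\,f(t,v_*)\,\mathrm{d}v_* .
\]
Everything therefore reduces to bounding the right-hand side from below by $K(1+|v|^2)^{\gamma/2}$, uniformly in $v$, in $\xi\in\mathbb{S}^2$ and in $t\in[0,\Delta t]$; the only mechanism by which this can fail is that a fixed fraction of the mass of $f(t,\cdot)$ sits in a thin tube around some line, and the heart of the matter is to exclude this for short times.

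\emph{Non-concentration of the initial datum.} For a line $L\subset\mathbb{R}^3$ and $\delta>0$ set $m_L(\delta,g):=\int_{\{\mathrm{dist}(\cdot,L)\le\delta\}}g$. The slab $\{\mathrm{dist}(\cdot,L)\le\delta\}\cap B_R$ lies in a cylinder of radius $\delta$ and length $2R$, hence has Lebesgue measure $\le 2\pi R\delta^2$ \emph{independently of $L$}; combining this with the absolute continuity of $\int f_{in}$ and with $\int_{|v_*|>R}f_{in}\to0$ as $R\to\infty$, one obtains that for every $\varepsilon>0$ there is $\delta_0>0$ with $\sup_{L}m_L(2\delta_0,f_{in})<\varepsilon$. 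At the end we will take $\varepsilon=M/16$, where $M:=\|f_{in}\|_{L^1}>0$.

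\emph{Propagation to a short time interval.} Fix $\chi\in C^\infty([0,\infty);[0,1])$ with $\chi\equiv0$ on $[0,1]$ and $\chi\equiv1$ on $[4,\infty)$, and put $\Psi_L:=1-\chi\bigl(\mathrm{dist}(\cdot,L)^2/\delta_0^2\bigr)$, so that $\Psi_L\in C^2_b$, $\mathbbm{1}_{\{\mathrm{dist}(\cdot,L)\le\delta_0\}}\le\Psi_L\le\mathbbm{1}_{\{\mathrm{dist}(\cdot,L)\le 2\delta_0\}}$, and — since $\mathrm{dist}(\cdot,L)^2$ is a quadratic polynomial with Hessian bounded by $2$ and with gradient $\le 4\delta_0$ on the support of $\chi'(\cdot/\delta_0^2)$ — one has $\|\Psi_L\|_{W^{2,\infty}}\le C\delta_0^{-2}$ uniformly in $L$. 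Pairing $f(t,\cdot)-f_{in}\in W^{-2,1}$ with $\Psi_L$ and with the constant $1$, and using $f\in C([0,T];W^{-2,1})$ to make $\|f(t,\cdot)-f_{in}\|_{W^{-2,1}}$ as small as needed, we obtain $\Delta t\in(0,T]$ such that, for all $t\in[0,\Delta t]$ and all lines $L$,
\[
M(f(t,\cdot))\ge\tfrac M2,\qquad m_L(\delta_0,f(t,\cdot))\le\int_{\mathbb{R}^3}\Psi_L\,f(t,v_*)\,\mathrm{d}v_*\le m_L(2\delta_0,f_{in})+C\delta_0^{-2}\|f(t,\cdot)-f_{in}\|_{W^{-2,1}}<2\varepsilon .
\]

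\emph{Conclusion and main difficulty.} Fix $t\in[0,\Delta t]$, $v$, $|\xi|=1$ and $L=L(v,\xi)$, and restrict the integral above to $G:=\{v_*:\mathrm{dist}(v_*,L)>\delta_0\}$. On $G$ one has $\mathrm{dist}(v_*,L)^2>\delta_0^2$; on the further restriction $G\cap\{|v_*|\le|v|/2\}$ one has $|v-v_*|\ge\max(|v|/2,\delta_0)\ge c(\delta_0)(1+|v|)$, hence $|v-v_*|^{\gamma}\ge c(\delta_0)^{\gamma}(1+|v|^2)^{\gamma/2}$ because $\gamma>0$. Therefore
\[
\overline{a_{ij}(v)}\,\xi_i\xi_j\ \ge\ c(\delta_0)^{\gamma}\delta_0^{2}(1+|v|^2)^{\gamma/2}\int_{G\cap\{|v_*|\le|v|/2\}}f(t,v_*)\,\mathrm{d}v_* ,
\]
and, using $m_L(\delta_0,f(t,\cdot))<2\varepsilon$ together with the Chebyshev bound $\int_{|v_*|>|v|/2}f(t,v_*)\,\mathrm{d}v_*\le 4|v|^{-2}M_2(f(t,\cdot))$ coming from $f\in L^\infty([0,T];L^1_2(\mathbb{R}^3))$,
\[
\int_{G\cap\{|v_*|\le|v|/2\}}f(t,v_*)\,\mathrm{d}v_*\ \ge\ \tfrac M2-2\varepsilon-\tfrac{4}{|v|^2}\sup_{[0,T]}M_2(f(t,\cdot)) .
\]
With $\varepsilon=M/16$ the last quantity exceeds $M/8$ once $|v|\ge R_0$ for a radius $R_0$ depending only on $M$ and $\sup_{[0,T]}M_2(f(t,\cdot))$, which gives the claim for $|v|\ge R_0$; for $|v|<R_0$ the weight $(1+|v|^2)^{\gamma/2}$ is bounded and the cruder bound $\overline{a_{ij}(v)}\,\xi_i\xi_j\ge\delta_0^{\gamma+2}\bigl(M(f(t,\cdot))-m_L(\delta_0,f(t,\cdot))\bigr)\ge\tfrac{3M}{8}\delta_0^{\gamma+2}$ completes the proof, with $K:=\min\bigl(c(\delta_0)^{\gamma}\delta_0^{2}M/8,\ \tfrac{3M}{8}\delta_0^{\gamma+2}(1+R_0^2)^{-\gamma/2}\bigr)$. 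The step I expect to be the main obstacle is the propagation one: one must render the assertion ``$f_{in}$ is not concentrated on a line'' quantitative, uniformly over the non-compact family of all lines of $\mathbb{R}^3$, and in a form robust under the weak perturbation controlled only in $W^{-2,1}$ — which is precisely what forces the introduction of the smooth tube cutoffs $\Psi_L$ with $W^{2,\infty}$-norm bounded independently of $L$; the remaining steps are routine bookkeeping with the second moment.
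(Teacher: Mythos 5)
Your proof is correct, and it takes a genuinely different route from the one the paper sketches (which is Desvillettes--Villani's). The paper's argument is geometric: locate three balls $B_1,B_2,B_3$ carrying positive mass of $f_{in}$ with non-collinear centers, propagate $\inf_{[0,\Delta t]}\int_{B_i}f>0$ by $W^{-2,1}$-continuity, and then argue that for every $v,\xi$ at least one $B_i$ lies outside a thin cone $D_{\theta_0(R),\xi}(v)$, producing the lower bound via the angle $\theta_0(R)$ and a large-$|v|$ companion estimate. You instead quantify non-concentration directly over the whole (non-compact) family of lines: by absolute continuity of $f_{in}\,dv$ and its $L^1_2$ tail, $\sup_L m_L(2\delta_0,f_{in})<\varepsilon$; you propagate this with smooth tube cutoffs $\Psi_L$ whose $W^{2,\infty}$ norm is bounded uniformly in $L$ by $C\delta_0^{-2}$ (a clean observation, since $\operatorname{dist}(\cdot,L)^2$ is a quadratic with Hessian $\le 2$); and you close with a simple Chebyshev split at $|v_*|\le|v|/2$ for large $|v|$ and a crude $\delta_0^{\gamma+2}$ bound for bounded $|v|$. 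This avoids the cone geometry entirely and yields an explicit $K$. The trade-off is generality: D--V's three-ball argument only needs $f_{in}$ to not charge any single line, which is why their remark extends to measure-valued data (indeed the present paper invokes exactly that extension after Lemma \ref{lem_line}); your tube estimate $\sup_L m_L(\delta_0,f_{in})\to 0$ genuinely uses absolute continuity and would fail, e.g., for atomic measures whose support is not contained in a line. For the lemma as stated, with $f_{in}\in L^1_2$, both arguments are valid, and yours is shorter and more quantitative. One cosmetic point: it is worth saying explicitly that for $|v|\ge R_0$ (taking $R_0\ge 2$) the bound $|v-v_*|\ge|v|/2\ge(1+|v|)/4$ suffices and the $\delta_0$-dependence in $c(\delta_0)$ is only needed in the transition region, but this does not affect the validity of the conclusion.
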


    \begin{remark}
		In \cite[Remarks of Lemma 9]{MR1737547}, it is stated that the proof of the lemma allows in fact the initial condition $f_{in}$ to be a measure, provided that it be not concentrated on a single line. In our case for the proof of Theorem \ref{main thm}, by our key lemma (Lemma \ref{lem_line}) we observe that the support of a solution $f$ instantaneously escapes a line support for any $t>0.$ Then for any $t_0>0$, on the time interval $[t_0,\infty)$ we can consider the measure $f_{t_0}$ as the initial profile of Lemma \ref{lem.elliptic.infinite} and obtain the ellipticity.    We introduce a sketch of their proof here. The main observation of the proof for Lemma \ref{lem.elliptic.infinite} in \cite{MR1737547} is that there exist three balls $B_i$, whose centers are not on a line such that $\displaystyle \int_{B_i} f_0(v) \,\mathrm{d}v > 0$. Then one can show that there exists $\Delta t > 0$ such that $\displaystyle \inf_{[0,\Delta t]} \int_{B_i} f(t,v) \,\mathrm{d}v > 0$.  Using this, one proves that for $|v|\le R$ for some $R>0$ and for $t\in[0,\Delta t],$ $$\bar{a}_{ij}(t,v)\xi_i\xi_j \ge \sin^2(\theta_0(R)) r^{\gamma+2}|\xi|^2,$$ for some angle $\theta_0(R)>0$ such that when $|v|\le R,$ for all $\xi\in\mathbb{R}^3,$ there exists $i\in\{1,2,3\}$ such that $D_{\theta_0(R),\xi}(v)\cap B_i$ is empty where $D_{\theta_0(R),\xi}(v)$ is a thin cone centered at $v$, of axis directed by $\xi$ and of angle $\theta_0(R)$, defined as
        $$D_{\theta_0(R),\xi}(v)\eqdef \left \{v_*\in\mathbb{R}^3:\left|\frac{v-v_*}{|v-v_*|}\cdot \xi \right|\ge \cos\theta\right\}.$$ For $|v|\ge R$ with $R$ sufficiently large, on the ball $B_i$ with empty intersection with $D_{\theta_0(R),\xi}(v)$, one can show that $\bar{a}_{ij}(t,v)\xi_i\xi_j \gtrsim |v|^\gamma|\xi|^2\int_{B_i}f(v_*)dv_*.$ Combining the two results, we obtain the ellipticity.
        \end{remark}


\subsection{Our main result and relevant previous results}
Now we state our main theorem.
\begin{theorem}\label{main thm}
Let $\gamma \in (0,1]$ and the initial profile $f_0 \in \mathcal{P}_p(\mathbb{R}^3)$ for $p\ge 2$ which is an axisymmetric initial data. Then, there exists an axisymmetric measure-valued weak solution to the Landau equation $$f\in C([0,\infty);\mathcal{P}(\mathbb{R}^3)) \cap L^{\infty}((0,\infty);\mathcal{P}_{p}(\mathbb{R}^3)) \cap L^{1}_{loc}((0,\infty);\mathcal{P}_{p+\gamma}(\mathbb{R}^3)).$$ Moreover, if $f_0$ is not a single Dirac mass, then $f$ is analytic for all $t>0$.
\end{theorem}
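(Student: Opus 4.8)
The plan is to construct the axisymmetric weak solution by a vanishing-viscosity/regularization scheme adapted to preserve axisymmetry, and then to upgrade regularity to analyticity in the hard-potential case using the ellipticity estimates of Proposition~\ref{prop.1.3} and Lemma~\ref{lem.elliptic.infinite} together with the moment gain of Theorem~\ref{Villani theorem}. First I would mollify the kernel $a_{ij}$ by the cutoff $\mu_\varepsilon$ as described in the remark after Proposition~\ref{prop.1.3}, and for data $f_0\in\mathcal{P}_p(\mathbb{R}^3)$, $p\ge2$, approximate $f_0$ by smooth axisymmetric densities $f_0^\varepsilon$ with uniformly bounded mass, energy, and $p$-th moment. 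The key structural point is that the Landau operator $Q(f,f)$ commutes with the action of $SO(3)$ on $v$, hence in particular with rotations about the symmetry axis; so if $f_0^\varepsilon$ is axisymmetric and the regularized equation is well-posed, its unique solution $f^\varepsilon(t)$ remains axisymmetric for all $t$. I would then establish uniform-in-$\varepsilon$ a priori bounds: conservation/decay of mass and energy, propagation of the $p$-th moment (Theorem~\ref{Villani theorem}(1) applied at the regularized level, or a direct moment estimate since the cutoff only helps at infinity), and the time-integrated gain $\int_0^T M_{p+\gamma}(f^\varepsilon)\,dt<\infty$. These bounds give tightness in $\mathcal{P}_2(\mathbb{R}^3)$ and equicontinuity in time (via the weak formulation against $C^2_b$ test functions, using $|a_{ij}(z)|\le|z|^{2+\gamma}$ and the moment bounds), so Arzel\`a--Ascoli yields a limit $f\in C([0,\infty);\mathcal{P}(\mathbb{R}^3))$ along a subsequence; passing to the limit in the weak formulation, and checking the soft-cutoff terms vanish, produces a weak solution in the sense of Definition~\ref{weak def}. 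Axisymmetry of $f$ is inherited from the $f^\varepsilon$ because the set of axisymmetric probability measures is weak-$*$ closed.

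For the analyticity claim, I would argue that if $f_0$ is not a single Dirac mass, then by the key Lemma~\ref{lem_line} (the line-escape lemma quoted in the excerpt) the support of $f(t)$ is not contained in any fixed line for $t>0$; fixing $t_0>0$ and taking $f_{t_0}$ as initial profile, Lemma~\ref{lem.elliptic.infinite} (in the measure-valued form noted in the remark) gives a time $\Delta t>0$ and the degenerate ellipticity $\overline{a_{ij}(v)}\xi_i\xi_j\ge K(1+|v|^2)^{\gamma/2}|\xi|^2$ on $[t_0,t_0+\Delta t]$. Then parabolic regularization for the linear equation $\partial_t f=\overline{a_{ij}}\partial_i\partial_j f-\overline{c}f$ with bounded measurable uniformly-elliptic coefficients (De~Giorgi--Nash--Moser, then Schauder in the H\"older spaces $\mathcal{H}^\ell$ defined in \eqref{holder def}) shows $f$ becomes a smooth, rapidly decaying function for $t>t_0$; a bootstrap using Theorem~\ref{Villani theorem}(5), i.e. $\sup_{t\ge t_1}\|f(t)\|_{H^k_s}<\infty$ for all $k,s$, promotes this to $C^\infty$ with all Sobolev norms controlled. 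Finally, to reach analyticity I would propagate analytic (Gevrey-$1$) regularity: either establish estimates of the form $\|\nabla^n f(t)\|_{L^2}\le C^{n+1}n!\,t^{-n}$ by an induction on $n$ using the smoothing of the heat-type semigroup generated by $\overline{a_{ij}}\partial_i\partial_j$ and the analytic-in-$v$ character of the coefficients $\overline{a_{ij}},\overline{b_i},\overline{c}$ (these are convolutions of the analytic symbols $|z|^{\gamma+2}$-type kernels with an $L^1$ density, hence real-analytic wherever $f$ has enough moments — one must be a bit careful at $\gamma$ non-integer, but for $t>t_0$ the density is Schwartz so $\overline{a}*f$ is as smooth as the kernel away from the origin, and the relevant estimates only use finitely many derivatives at each induction step with factorial growth), or invoke an abstract analytic-smoothing theorem for uniformly parabolic equations with analytic coefficients. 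Since $t_0>0$ was arbitrary, $f$ is analytic in $v$ for every $t>0$.

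The main obstacle I expect is the analyticity bootstrap, specifically controlling the $v$-derivatives of the coefficients $\overline{a_{ij}}=a_{ij}*f$ with factorial growth uniformly in the induction: the kernel $|z|^{\gamma+2}$ is only finitely smooth at the origin when $\gamma\notin 2\mathbb{N}$, so one cannot naively differentiate under the convolution arbitrarily many times against a general measure. The resolution is to exploit that \emph{after} the first regularization step $f(t)$ for $t>t_0$ is a Schwartz function, so the convolution $|z|^{\gamma+2}*f$ inherits analyticity in $v$ from $f$ itself (the singularity of the kernel at $z=0$ is integrable and contributes only a fixed loss, while large $|z|$ is controlled by the rapid decay of $f$ and its derivatives from \eqref{reg}); quantifying this so that the constants in $\|\nabla^n\overline{a_{ij}}\|_{L^\infty}\le C^{n+1}n!$ feed correctly into the energy-type induction for $\|\nabla^n f\|_{L^2}$ is the delicate bookkeeping. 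A secondary technical point is ensuring the regularized scheme genuinely preserves axisymmetry and that the cutoff terms vanish in the limit without destroying the lower ellipticity bound, but this is handled exactly as in the cited works of Desvillettes--Villani.
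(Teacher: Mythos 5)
Your construction of the weak solution follows the same regularization-and-compactness route as the paper (kernel cutoff $\mu_\varepsilon$, smooth axisymmetric approximate data, $SO(3)$-commutation of $Q$ to propagate symmetry, uniform moment bounds plus time-equicontinuity, Arzel\`a--Ascoli, and weak-$*$ closedness of axisymmetric measures), and this part is essentially identical to the paper's Section~3; the one technical difference is that the paper also adds the extra viscosity term $\varepsilon\Delta_v f$ so that the linearized equations are strictly parabolic with smooth coefficients, which makes the Schauder fixed-point step and the Gaussian upper bound cleaner, but your variant is workable.

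The place where your argument has a genuine gap is the step from Lemma~\ref{lem_line} to ellipticity. You assert that the lemma gives ``the support of $f(t)$ is not contained in any fixed line for $t>0$,'' but what the lemma actually delivers is weaker and more specific: for a \emph{given} line $\ell$ there is no time interval on which $f$ is concentrated on $\ell$. This does not, by itself, rule out the support lying on a different (time-dependent, rotating) line at each $t$, and it does not imply that at some $t$ the support contains three non-collinear points, which is exactly what Lemma~\ref{lem.elliptic.infinite} needs. The paper closes this gap by exploiting axisymmetry in an essential way: an axisymmetric measure that is concentrated on a line must be concentrated on the symmetry axis, so escaping the one fixed line (the axis) at some time already forces the support to contain a circle, hence a triangle. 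This is not a cosmetic remark; it is precisely why the main theorem is stated for axisymmetric data and why the paper's key lemma, although simpler than Lemma~16 of \cite{FOURNIER20211961}, suffices. Your write-up should make this deduction explicit, since without it the passage to Lemma~\ref{lem.elliptic.infinite} is unjustified.

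Two smaller points. First, for the endpoint $p=2$ the paper does not directly invoke the regularity bootstrap: it first uses the moment-gain of Theorem~\ref{Villani theorem} to find $t_2>0$ with $M_{2+\delta}(f_{t_2})<\infty$ and $f_{t_2}$ not line-concentrated, then restarts from $f_{t_2}$ and identifies the analytic Desvillettes--Villani solution with $f$ via the uniqueness Theorem~\ref{uniq.thm}, which is only available in $\mathcal{P}_{2+\delta}$ with $\delta>0$. Your proposal treats $p\ge 2$ uniformly and would need to add this detour for $p=2$. Second, your closing paragraph proposes to \emph{prove} Gevrey-$1$ estimates by induction on the derivative order; this is more than is needed, since the paper simply cites the Chen--Li--Xu result (Theorem~\ref{analytic theorem}): once one has $\sup_{t\ge t_0}\|f(t)\|_{H^m_\gamma}$ bounds for all $m$, analyticity follows from that theorem. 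Your alternative route is plausible but amounts to re-deriving their result, with exactly the bookkeeping difficulties you flag; in a paper one would cite it, as the authors do.
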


\begin{remark}Here are some remarks on our main theorem. 
    \begin{enumerate}
        \item If $f_0 \in \mathcal{P}_p(\mathbb{R}^3)$ is axisymmetric with $p > 2$, then the axisymmetric function $f$ is a unique weak solution to the Landau equation starting from $f_0$; see \cite[Theorem 8]{FOURNIER20211961}. The uniqueness is given in terms of the optimal transportation cost $\mathcal{T}_p(f,\tilde{f})$.
        \item In the soft potential case, we can only show that there are no axisymmetric line-concentrated solutions even for any given line-concentrated data  (see Lemma \ref{lem_line}). In the case when $\gamma\in[-3,-2)$ it has been proved by Golse-Imbert-Ji-Vasseur in \cite{2206.05155} that if a weak solution is axisymmetric, then it is smooth away from the symmetry axis.
        \item For the moderately soft potential case for $\gamma \in (-2,0)$, 
Fournier and Gu\'erin proved the existence of weak solutions; see \cite[Corollary 4]{MR2502525}. 
    \end{enumerate}
\end{remark}

We then introduce relevant previous results for hard potentials. 
 In \cite{MR1737547}, for initial data in $L^1_{2+\delta} \cap L \log L$ with some $\delta > 0$, Desvillettes and Villani prove that there exists a global weak solution to the Landau equation. Moreover, the solution becomes smooth for $t > 0$; see Theorem \ref{Villani theorem}.
On the other hand, Chen-Li-Xu prove in \cite{MR2557895} the instantaneous analytic smoothing of solutions for hard potentials:
\begin{theorem}[Theorem 1 of \cite{MR2557895}]\label{analytic theorem}
Let $f_0$ be the initial datum with finite mass, energy and entropy and $f(t,v)$ be any solution of the Cauchy problem \eqref{eq:L} such that for all time $t_0>0$ and all integer $m\ge 0,$ $$\sup_{t\ge t_0}\|f(t,\cdot)\|_{H^m_\gamma}\le C,$$ with $C$ a constant depending only on $\gamma,M_0,E_0,H_0,m$ and $t_0.$ Then for all time $t>0,$ $f(t,v)$, as a real function of $v$ variable, is analytic in $\mathbb{R}^3_v$.
\end{theorem}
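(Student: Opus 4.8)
\emph{Reduction to a Cauchy–Gevrey bound.} Analyticity of $v\mapsto f(t,v)$ is characterised by factorial control of all $v$-derivatives, so it suffices to produce constants $A,L$ depending only on $\gamma,M_0,E_0,\mathrm{H}_0,t_0$ with
\[
\|\partial_v^\alpha f(t,\cdot)\|_{L^2_\gamma}\le A\,L^{|\alpha|}\,|\alpha|!\qquad\text{for all }\alpha\in\mathbb{N}^3,\ t\ge t_0 .
\]
Since $\gamma>0$ makes the weight $\ge1$, this controls the plain $H^{m}$ norm of $f(t,\cdot)$ with factorial growth in $m$, and Sobolev embedding then gives $\|\partial_v^\alpha f(t)\|_{L^\infty}\lesssim (L')^{|\alpha|}|\alpha|!$, i.e. real-analyticity with a uniform radius of convergence on $\mathbb{R}^3$. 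Before starting, I would upgrade the hypothesis: combining the instantaneous moment generation of Theorem~\ref{Villani theorem}(2) (which needs only $M_0,E_0$) with the assumed bounds $\sup_{t\ge t_0}\|f(t)\|_{H^m_\gamma}\le C_m$ and a routine parabolic bootstrap on $\partial_t f=\overline{a_{ij}}\partial_i\partial_j f-\overline{c}\,f$, one may assume $\sup_{t\ge t_0}\|f(t)\|_{H^m_s}\le C_{m,s}<\infty$ for every $m,s\ge0$. Thus all weighted Sobolev norms of $f$, and hence of $\overline{a_{ij}}=a_{ij}*f$, $\overline{c}=c*f$ and their $v$-derivatives, are available, which is exactly what tames the polynomial growth $|z|^{2+\gamma}$ of $a_{ij}$ inside the convolutions.

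\emph{The inductive weighted energy estimate.} Fix $N$ and assume the Gevrey bound for all multi-indices of length $<N$. For $|\alpha|=N$, apply $\partial_v^\alpha$ to \eqref{eq:L} in the form $\partial_t f=\overline{a_{ij}}\partial_i\partial_j f-\overline{c}\,f$ and test against $(1+|v|^2)^{\gamma/2}\partial_v^\alpha f$. In the leading term $\int(1+|v|^2)^{\gamma/2}\,\overline{a_{ij}}\,\partial_i\partial_j\partial_v^\alpha f\,\partial_v^\alpha f$, one integration by parts plus the ellipticity of Proposition~\ref{prop.1.3} (applicable since $f$ has finite mass, energy and entropy — or its infinite-entropy variant, Lemma~\ref{lem.elliptic.infinite}) produce the dissipation $-K\int(1+|v|^2)^{\gamma}|\nabla\partial_v^\alpha f|^2$, up to a term carrying $\partial_i\overline{a_{ij}}=\overline{b_j}$ that is bounded using $E_0<\infty$. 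The remaining contributions come from the Leibniz expansions
\[
\partial_v^\alpha\big(\overline{a_{ij}}\,\partial_i\partial_j f\big)=\overline{a_{ij}}\,\partial_i\partial_j\partial_v^\alpha f+\sum_{0<\beta\le\alpha}\binom{\alpha}{\beta}\big(\partial_v^\beta\overline{a_{ij}}\big)\,\partial_i\partial_j\partial_v^{\alpha-\beta}f ,
\]
and the analogous one for $\partial_v^\alpha(\overline{c}\,f)$. Each commutator term carries $N+2$ derivatives of $f$ — one beyond what the dissipation absorbs — so I would integrate by parts once more to move a derivative off $\partial_i\partial_j\partial_v^{\alpha-\beta}f$, drop the top order to $N+1$, and split by Cauchy–Schwarz so that the factor $\nabla\partial_v^\alpha f$ is absorbed into $K\int(1+|v|^2)^{\gamma}|\nabla\partial_v^\alpha f|^2$. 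Crucially the derivatives must be kept on $f$ in the convolutions, $\partial_v^\beta\overline{a_{ij}}=a_{ij}*\partial_v^\beta f$, because $a_{ij}$ is only $C^{1,\gamma}$ at the origin when $\gamma<1$; this factor (polynomial weight and all) is then estimated by a weighted $L^1\subset L^2_s$ norm of $\partial_v^\beta f$ with $|\beta|<N$, i.e. by a quantity at a strictly lower induction level.

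\emph{Closing the induction.} Put $y_N:=\sup_{t\ge t_0}\sum_{|\alpha|=N}\|\partial_v^\alpha f(t)\|_{L^2_\gamma}$ and form the generating series $Y(\rho)=\sum_{N\ge0}\rho^N y_N/N!$. The binomial factors $\binom\alpha\beta$ turn the $\beta$-sums into a Cauchy product of generating series; after absorbing the dissipation and bounding the low-order coefficient factors via the weighted bounds from the preliminary reduction, one reaches a closed recursive (or differential) inequality for the $y_N$ that, for $\rho$ small enough — equivalently $L$ large enough — yields $y_N\le A\,L^N N!$ by induction. One technical wrinkle is that the energy estimate only controls $\|\partial_v^\alpha f(t)\|$ in terms of its value at the left endpoint of the time interval, where no factorial bound is yet known; I would absorb this by running the estimate with a time weight vanishing at $t_0/2$ (or on a telescoping family of nested subintervals of $[t_0/2,t_0]$), in the spirit of Friedman's proof of interior analyticity for parabolic equations.

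\emph{Main obstacle.} The genuine difficulty is the quasilinear self-coupling: the coefficients $\overline{a_{ij}},\overline{c}$ are exactly as regular as $f$, so the induction must control $f$ and its coefficients simultaneously, and the bookkeeping has to be arranged so that the borderline Leibniz terms — $|\beta|=1$, $|\beta|=N-1$, and the single term in which all of $\partial_v^\alpha$ hits a coefficient — together with the merely anisotropic, weight-degenerate ellipticity (only the $(1+|v|^2)^{\gamma/2}|\xi|^2$ lower bound of Proposition~\ref{prop.1.3} is at hand) still produce $N!$, and not $(N!)^2$, growth of the constants. Pinning down that precise combinatorial balance — choosing the dissipation share, the weights and the radius $\rho$ so the induction truly closes — is where the argument must be carried out carefully; everything else is the standard Cauchy-estimate machinery for nonlinear parabolic equations.
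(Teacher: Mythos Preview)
This theorem is not proved in the present paper: it is quoted verbatim as Theorem~1 of Chen--Li--Xu \cite{MR2557895} and simply invoked as a black box in Section~3.6. There is therefore no ``paper's own proof'' to compare your proposal against.

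That said, your outline is precisely the strategy one expects for such a result, and it is in the spirit of what Chen--Li--Xu actually do: derive factorial bounds $\|\partial_v^\alpha f\|\le A L^{|\alpha|}|\alpha|!$ by differentiating the equation, using the ellipticity of Proposition~\ref{prop.1.3} to produce dissipation, expanding the commutators by Leibniz, and closing an induction on $|\alpha|$ via a generating-function or majorant-series argument. Your identification of the genuine difficulty --- that the coefficients $\overline{a_{ij}},\overline{c}$ are only as smooth as $f$ itself, so the induction must treat solution and coefficients simultaneously without losing a factor of $N!$ --- is exactly right, and your remark about keeping derivatives on $f$ inside the convolutions (since $a_{ij}$ is only $C^{1,\gamma}$ at the origin) is a correct and important technical point. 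The time-weight device you mention for handling the left endpoint is also standard and appropriate here. As a sketch this is sound; the actual execution in \cite{MR2557895} is of course where the combinatorics are pinned down.
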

Regarding the probability measure-valued solutions, Fournier and Heydecker construct a weak solution in the hard potentials:
\begin{theorem}[Theorems 3 and 5 of \cite{FOURNIER20211961}]\label{fournier analytic}
	Let $\gamma \in (0,1]$ and $f_0 \in \mathcal{P}_2(\mathbb{R}^3)$. 
	\begin{itemize}
	\item There exists a weak solution to the Landau equation starting from $f_0$
	\item If $f_0$ is not a Dirac mass, then $f$ is analytic for all $t>0$.
	\end{itemize}
\end{theorem}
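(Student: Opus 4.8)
The plan is to produce an axisymmetric solution by an approximation compatible with rotations about the symmetry axis, which we fix to be $\mathbb{R}e_3$; write $R$ for a generic rotation about it. Regularize the datum by an isotropic Gaussian, $f_0^\varepsilon := f_0 * \mathcal{N}(0,\varepsilon^2 I)$: this is a smooth probability density, invariant under every $R$ because $f_0$ and the Gaussian are, with $\mathcal{W}_2(f_0^\varepsilon,f_0)\to0$, $M_p(f_0^\varepsilon)\le M_p(f_0)+C_p\varepsilon^p$, and finite moments of all orders, so in particular $f_0^\varepsilon\in\mathcal{P}_{2+\gamma}(\mathbb{R}^3)$ with $2+\gamma>2$. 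By Theorem \ref{fournier analytic} (existence) together with \cite[Theorem 8]{FOURNIER20211961} (uniqueness for data with more than two moments), there is a unique weak solution $f^\varepsilon\in C([0,\infty);\mathcal{P}(\mathbb{R}^3))$ starting from $f_0^\varepsilon$; since the Landau operator commutes with rotations, $t\mapsto f^\varepsilon(t,R\,\cdot)$ solves the same equation with the same axisymmetric datum, and uniqueness forces $f^\varepsilon(t,R\,\cdot)=f^\varepsilon(t,\cdot)$, so each $f^\varepsilon$ is axisymmetric. Now collect uniform-in-$\varepsilon$ estimates: conservation of mass and momentum; $E(f^\varepsilon_t)\le E(f_0^\varepsilon)=E(f_0)+\tfrac{3}{2}\varepsilon^2$; $\sup_{t\ge0}M_p(f^\varepsilon_t)\le C$ and $\int_0^T M_{p+\gamma}(f^\varepsilon_t)\,dt\le C_T$ by Theorem \ref{Villani theorem}(1) applied to $f^\varepsilon$ (its datum lies in $L^1_{2+\gamma}$, the constants depending only on $M(f_0^\varepsilon),E(f_0^\varepsilon)$, hence uniform in small $\varepsilon$); and the time-equicontinuity $|\int_{\mathbb{R}^3}\varphi\,f^\varepsilon_t(dv)-\int_{\mathbb{R}^3}\varphi\,f^\varepsilon_s(dv)|\le C\|\varphi\|_{C^2_b}\int_s^t(1+M_{p+\gamma}(f^\varepsilon_\tau))\,d\tau$ read off from Definition \ref{weak def} using $|a_{ij}(z)|\le|z|^{2+\gamma}$ and $|b_i(z)|\le|z|^{1+\gamma}$. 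By Prokhorov's theorem and an equicontinuity argument a subsequence converges to some $f$ in $C([0,\infty);\mathcal{P}(\mathbb{R}^3))$, locally uniformly in $t$, with $f(0)=f_0$; weak lower semicontinuity of moments puts $f$ in $L^\infty((0,\infty);\mathcal{P}_p)\cap L^1_{loc}((0,\infty);\mathcal{P}_{p+\gamma})$ and gives the energy inequality, and passing to the limit in the weak formulation (the $\gamma$-weighted bilinear terms are controlled by the $(p+\gamma)$-moments since $p\ge2$) shows $f$ is a weak solution in the sense of Definition \ref{weak def}. It is axisymmetric because each $f^\varepsilon$ is and pushforward by $R$ is weakly continuous. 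When $p>2$ one may skip the limit: the rotation-and-uniqueness argument applies directly to $f_0$.

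\textbf{Analyticity.} Suppose $f_0$ is not a single Dirac mass. An axisymmetric probability measure whose support lies in a line must be supported on the axis $\mathbb{R}e_3$, the only line invariant under all $R$, possibly at a single point of it; so $f_0$ is either not concentrated on any line, or a non-degenerate axisymmetric measure on $\mathbb{R}e_3$. In either case the key Lemma \ref{lem_line} applies and shows that $\operatorname{supp} f_t$ lies in no line for every $t>0$. Hence, for each $t_0>0$, $f_{t_0}$ is an axisymmetric probability measure with finite energy and not concentrated on a line, so by the measure-valued version of Lemma \ref{lem.elliptic.infinite} stated in the Remark following it there are $\Delta t>0$ and $K>0$ with $\overline{a_{ij}(v)}\,\xi_i\xi_j\ge K(1+|v|^2)^{\gamma/2}|\xi|^2$ for all $v,\xi\in\mathbb{R}^3$ and $t\in[t_0,t_0+\Delta t]$. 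On this slab $f$ solves the linear parabolic equation $\partial_t f=\overline{a_{ij}(v)}\,\partial_i\partial_j f-\overline{c(v)}\,f$ whose coefficients are bounded on compacts (since $\overline{a_{ij}(v)}\le C(1+|v|^{2+\gamma})\,M_{2+\gamma}(f_t)<\infty$ for a.e.\ $t$) and uniformly elliptic ($\gamma>0$), so the parabolic smoothing effect turns the measure $f_{t_0}$ into a function that for $t>t_0$ lies in $L^1_{2+\gamma}\cap L\log L$. Taking such a $t_1>t_0$ as a fresh initial profile and running the Desvillettes--Villani argument behind Theorem \ref{Villani theorem}(5), equivalently using Proposition \ref{prop.1.3} now that finite nonincreasing entropy is available together with the usual energy-estimate and moment-gain bootstrap, gives $\sup_{t\ge t_2}\|f(t)\|_{H^k_s}<\infty$ for all $k\in\mathbb{N}$, $s\ge0$, and all $t_2>t_1$. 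Theorem \ref{analytic theorem} of Chen--Li--Xu then yields that $v\mapsto f(t,v)$ is real-analytic for $t>t_2$; since $t_0>0$ was arbitrary, $f$ is analytic for all $t>0$.

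\textbf{Main obstacle.} I expect the crux to lie in the regularity half, for two reasons. First, at the endpoint $p=2$ weak solutions are not known to be unique, so the entire argument must be run through the approximants, which forces uniform-in-$\varepsilon$ moment and gain-of-moments control strong enough both to pass to the limit in the $\gamma$-weighted nonlinear terms and to land back in the solution class of Definition \ref{weak def}. Second, and more seriously, the regularity statement concerns a merely measure-valued datum, so one genuinely has to promote a measure to a function: the ellipticity from the Remark after Lemma \ref{lem.elliptic.infinite} is available only on a short time slab and with coefficients that are a priori only bounded and measurable, so the parabolic bootstrap up to the $H^k_s$ bounds --- together with the verification that $f_{t_1}$ satisfies the hypotheses ($L^1_{2+\delta}$, finite entropy, off every line) needed to launch Theorem \ref{Villani theorem}(5) --- requires a careful short-time iteration before Theorem \ref{analytic theorem} can be applied. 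Establishing Lemma \ref{lem_line} itself, that the support escapes every line instantaneously, is the other load-bearing ingredient.
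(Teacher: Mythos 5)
Theorem \ref{fournier analytic} is cited from \cite{FOURNIER20211961} and is not proved in this paper; what the paper proves (Theorem \ref{main thm}) is an axisymmetric analogue of it. Your proposal in fact only reaches the latter. The very first step takes $f_0$ to be invariant under rotations $R$ about $e_3$ --- you need this so that $f_0^\varepsilon = f_0 * \mathcal{N}(0,\varepsilon^2 I)$ and hence each $f^\varepsilon$ is axisymmetric --- but Theorem \ref{fournier analytic} makes no symmetry assumption on $f_0\in\mathcal{P}_2(\mathbb{R}^3)$. The gap is not cosmetic, because the analyticity half of your argument rests entirely on axisymmetry. Lemma \ref{lem_line} only rules out concentration on one \emph{fixed} line over a time interval; for a general solution the support could a priori sit on a different line at each instant, and no single choice of the test function $\varphi = \tfrac14(v_2^2+v_3^2)\sigma(v_2,v_3)$ produces the contradiction. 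The paper's remark after Lemma \ref{lem_line} says exactly this and points to Lemma 16 of \cite{FOURNIER20211961} --- that for any $\delta>0$ there is $t\in(0,\delta)$ at which $f_t$ is concentrated on \emph{no} line --- as the genuinely harder input Fournier--Heydecker need in the general case. Your proposal has no substitute for that lemma; the shortcut ``the only $R$-invariant line is the axis'' is precisely what restricts the conclusion to axisymmetric data, i.e.\ to Theorem \ref{main thm}, not to the cited theorem.

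A secondary issue is circularity: in the existence half you invoke ``Theorem \ref{fournier analytic} (existence)'' to produce a solution starting from $f_0^\varepsilon$, but that is the statement under proof. You should instead rely on Theorem \ref{Villani theorem}(4) for $L^1_{2+\delta}$ data, or run the $\varepsilon$-regularized parabolic scheme of Section 3 (linearization, maximum principle, Schauder fixed point) to generate the approximants. Your uniform moment bounds, time-equicontinuity estimate, and Arzel\`a--Ascoli passage to the limit are otherwise in the right spirit and do track the paper's Section 3 --- but that section proves the axisymmetric Theorem \ref{main thm}, not the general-data result you were asked about.
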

To prove the second statement, they prove the following lemma
\begin{lemma}[Lemma 16 of \cite{FOURNIER20211961}]
	Let $\gamma \in (0,1]$ and $f_0 \in \mathcal{P}_4(\mathbb{R}^3)$ which is not a single Dirac mass, and $f$ be a weak solution to the Landau equation starting from $f_0$. Then for any given $t_0>0$, there exists $t \in (0,\delta)$ such that $f(t)$ is not concentrated on any line $\ell \subset \mathbb{R}^3$.
\end{lemma}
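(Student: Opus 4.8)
The plan is to argue by contradiction: suppose $f_0$ is not a single Dirac mass, fix $\delta>0$, and suppose that $f(t)$ is carried by some line $\ell_t\subset\mathbb{R}^3$ for every $t\in(0,\delta)$; we will reach a contradiction. Conservation of mass and momentum (test the weak formulation against $1$ and against $v$) lets us translate so that the mean of $f_0$, hence of every $f_t$, is the origin, and then each $\ell_t=\mathbb{R}\sigma_t$ with $\sigma_t\in\mathbb{S}^2$. Throughout we use the uniform moment bound $\sup_{t\ge0}M_4(f_t)<\infty$ from the propagation of moments (Theorem~\ref{Villani theorem}(1), in the measure-valued form of \cite{FOURNIER20211961}) and the weak continuity of $t\mapsto f_t$ on $[0,\infty)$ (which follows from the weak formulation tested against $C_b$ functions and density). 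The first step is to show that $f_0$ is itself carried by a line through the origin: picking $t_n\downarrow0$ with $\sigma_{t_n}\to\sigma_0\in\mathbb{S}^2$ (compactness), lower semicontinuity of $v\mapsto\mathrm{dist}(v,\mathbb{R}\sigma_0)^2$ under weak convergence, together with $\int\mathrm{dist}(v,\ell_{t_n})^2 f_{t_n}(dv)=0$ and the uniform integrability of $|v|^2$ provided by the $M_4$ bound, force $\int\mathrm{dist}(v,\mathbb{R}\sigma_0)^2 f_0(dv)=0$ (if $f_0$ were not carried by a line, this is already the contradiction). Hence $f_0=(\Phi_{\sigma_0})_\#\,g_0$, where $\Phi_\sigma(s)=s\sigma$ and $g_0\in\mathcal{P}(\mathbb{R})$ has $\int s\,g_0(ds)=0$; since $f_0\neq\delta_0$, $g_0$ is not a point mass, so $\lambda_0:=\int s^2 g_0(ds)=2E(f_0)>0$ and $\beta_0:=\int_{\mathbb{R}^2}|s-s_*|^{2+\gamma}g_0(ds)g_0(ds_*)\in(0,\infty)$.

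Next I would track the second-moment matrix $S(t):=\big(\int v_k v_l\,f_t(dv)\big)_{k,l}$. Testing the weak formulation against $\varphi(v)=v_kv_l$ — legitimate after truncation, the error terms being controlled by $\sup_t M_4(f_t)<\infty$ and $f\in L^1_{loc}((0,\infty);\mathcal{P}_{2+\gamma})$ — and symmetrizing in $v\leftrightarrow v_*$ gives that $S_{kl}$ is absolutely continuous with
\begin{equation*}
\dot S_{kl}(t)=2\!\int\!\!\int a_{kl}(v-v_*)\,f_t(dv)f_t(dv_*)+\int\!\!\int\big(b_k(v-v_*)(v-v_*)_l+b_l(v-v_*)(v-v_*)_k\big)f_t(dv)f_t(dv_*).
\end{equation*}
For $t\in(0,\delta)$ every pair $v,v_*$ in the support of $f_t$ has $v-v_*\parallel\sigma_t$, so $a_{kl}(v-v_*)=(\delta_{kl}-(\sigma_t)_k(\sigma_t)_l)|v-v_*|^{2+\gamma}$ and $b_k(v-v_*)(v-v_*)_l=-2(\sigma_t)_k(\sigma_t)_l|v-v_*|^{2+\gamma}$, whence
\begin{equation*}
\dot S_{kl}(t)=\beta(t)\big(2\delta_{kl}-6(\sigma_t)_k(\sigma_t)_l\big),\qquad \beta(t):=\int\!\!\int|v-v_*|^{2+\gamma}\,f_t(dv)f_t(dv_*)>0 .
\end{equation*}

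To close the argument, fix orthonormal $e_1,e_2$ perpendicular to $\sigma_0$ and set $T_{ab}(t):=\langle S(t)e_a,e_b\rangle$ for $a,b\in\{1,2\}$. Since $f_t$ lives on $\mathbb{R}\sigma_t$, $S(t)=\lambda(t)\,\sigma_t\otimes\sigma_t$ has rank one, so the $2\times2$ matrix $T(t)$ has rank at most one and $\det T(t)=0$ for all $t\in(0,\delta)$. On the other hand, the formula above gives $\dot T_{ab}(t)=\beta(t)\big(2\delta_{ab}-6\langle\sigma_t,e_a\rangle\langle\sigma_t,e_b\rangle\big)$, and as $t\downarrow0$ we have $\sigma_t\to\sigma_0$ (hence $\langle\sigma_t,e_a\rangle\to0$), $\beta(t)\to\beta_0>0$, and $f_t$ is not a Dirac mass for small $t$ because $E(f_t)\to E(f_0)>0$ (weak lower semicontinuity plus the energy inequality give $\lim_{t\downarrow0}E(f_t)=E(f_0)$). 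Integrating, $T_{ab}(t)=\int_0^t\dot T_{ab}(s)\,ds=2\beta_0\,\delta_{ab}\,t+o(t)$, so $\det T(t)=4\beta_0^2\,t^2+o(t^2)>0$ for small $t>0$, contradicting $\det T(t)\equiv0$. This contradiction proves the lemma.

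The conceptual core is short — a line-concentrated solution forces the transverse block of the covariance to stay of rank $\le1$, while the Landau dynamics makes it grow like $2\beta_0 t\,I_2$, i.e.\ with full rank $2$ — so I expect the genuine work to be the analytic bookkeeping: legitimizing the unbounded test functions $v\mapsto v_kv_l$ in the weak formulation, and the passages to the limit $t\downarrow0$ for $\sigma_t$, $\beta(t)$, $S(t)$ and $E(f_t)$, each of which rests on weak continuity together with the uniform $M_4$ bound (in particular, ruling out that $f_t$ collapses to a Dirac mass for small $t>0$, which is exactly what keeps $\beta(t)$ bounded away from $0$ near $t=0$).
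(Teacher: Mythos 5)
Your argument is correct in outline and is a genuinely different route from what appears in the paper. Note first that the paper does \emph{not} prove this statement: it is cited verbatim as Lemma~16 of \cite{FOURNIER20211961}. What the paper does prove is its weaker Lemma~\ref{lem_line}, which only rules out concentration on a \emph{fixed, prescribed} line $\ell$; the authors explicitly remark that this leaves open the ``rotating line'' scenario, and they sidestep it by restricting to axisymmetric solutions. The paper's argument for Lemma~\ref{lem_line} takes $\ell=\{v_2=v_3=0\}$ and tests the weak formulation against $\varphi=\tfrac14(v_2^2+v_3^2)\sigma(v_2,v_3)$, a cutoff of the transverse quadratic moment; since $\varphi$, $\nabla\varphi$ vanish on $\ell$ while $\Delta_h\varphi=1$ there, the left side of the weak identity is zero while the right side equals $2\int_0^T\!\!\int\!\!\int|v-v_*|^{2+\gamma}f_tf_t\,dt>0$, a contradiction. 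In your notation this is precisely tracking $\mathrm{tr}\,T(t)$, which indeed suffices when the line is fixed (so $T(0)=0$ and $\mathrm{tr}\,T$ must remain $0$) but fails to give information if $\ell_t$ is allowed to rotate, since then a positive $\mathrm{tr}\,T(t)$ is compatible with a rank-one $S(t)$. Your upgrade — tracking the full $2\times2$ transverse block $T(t)$, using $\det T\equiv0$ from the rank-one constraint, and showing $\dot T(t)\to 2\beta_0 I_2$ forces $\det T(t)>0$ for small $t$ — is exactly the right fix and handles the time-dependent line that the paper's test-function argument cannot. Two points worth tightening: (i) the convergence $\sigma_t\to\sigma_0$ as $t\downarrow0$ (not merely along the chosen subsequence) should be justified by observing that every subsequential cluster point $\tau_0\in\mathbb S^2$ of $\{\sigma_t\}$ yields, by the same lower-semicontinuity argument, $\int\mathrm{dist}(v,\mathbb R\tau_0)^2f_0(dv)=0$, and since $f_0$ is not a Dirac mass it determines its supporting line uniquely, whence $\tau_0=\pm\sigma_0$; and (ii) the claim $\beta(t)\to\beta_0$ needs the $M_4$ bound to uniformly integrate $|v-v_*|^{2+\gamma}$ (order $|v|^3$ at worst), which is exactly where the hypothesis $f_0\in\mathcal P_4$ is used, so it is worth saying so explicitly. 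With these bookkeeping points made explicit the proof is complete.
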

In the same paper, the authors also provide the following uniqueness theorem in the sense of transportation cost $\mathcal{T}_p$:
\begin{theorem}
    \label{uniq.thm}
    Fix $\gamma\in(0,1]$ and $p>2$ and two weak solutions $f_t$ and $\tilde{f}_t$ to \eqref{eq:L} starting from $f_0$ and $\tilde{f}_0$, both belonging to $\mathcal{P}_p(\mathbb{R}^3).$ There is a constant $C,$ depending only on $p$ and $\gamma$, such that for all $t\ge 0,$ 
    \begin{multline*}
        \mathcal{T}_p(f_t,\tilde{f}_t)\\
        \le \mathcal{T}_p(f_0,\tilde{f}_0)\exp \left(C[1+\sup_{s\in[0,t]}M_p(f_s+\tilde{f}_s)][1+\int_0^t(1+M_{p+\gamma}(f_s+\tilde{f}_s)ds]\right).
    \end{multline*}
\end{theorem}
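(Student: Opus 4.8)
The plan is to adapt the probabilistic coupling method of Fournier and Gu\'erin to the Landau setting, following closely the argument behind \cite[Theorem 8]{FOURNIER20211961}. The starting point is the observation that a weak solution $f_t$ to \eqref{eq:L} in the sense of Definition \ref{weak def} is the time-marginal law of a stochastic process solving an SDE driven by white noise, with drift $\overline{b}$ and diffusion coefficient $\sigma$ where $\sigma\sigma^{\top}=2\overline{a}$; more precisely one realizes $a_{ij}(z)=\sum_k \sigma_{ik}(z)\sigma_{jk}(z)$ via an explicit square root $\sigma(z)$ of the matrix $(\delta_{ij}-\hat z_i\hat z_j)|z|^{\gamma+2}$, and couples the two solutions $f_t,\tilde f_t$ by driving both SDEs with the same noise while choosing, at each $(v,v_*)$, an optimal transference plan $R_t\in\Pi(f_t,\tilde f_t)$ for the cost $|\cdot-\cdot|^p$. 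Writing $V_t,\tilde V_t$ for the two coupled solutions with $(V_*,\tilde V_*)\sim R_t$ independent, one obtains a closed differential inequality for $\mathbb{E}|V_t-\tilde V_t|^p$.

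The key steps, in order, are: (i) set up the nonlinear SDE representation and verify (using the moment bounds $f_t,\tilde f_t\in\mathcal{P}_p$ and the $L^1_{loc}$ integrability of $M_{p+\gamma}$ from Definition \ref{weak def}) that all stochastic integrals make sense; (ii) apply the It\^o formula to $|V_t-\tilde V_t|^p$, producing a drift term involving $b_i(V-V_*)-b_i(\tilde V-\tilde V_*)$ paired with $p|V-\tilde V|^{p-2}(V-\tilde V)_i$, and a second-order term involving the difference of the two diffusion matrices $a_{ij}(V-V_*)-a_{ij}(\tilde V-\tilde V_*)$ (or rather the square-root matrices $\sigma$); (iii) establish the crucial pointwise Lipschitz-type bounds
\[
\bigl|b_i(z)-b_i(z')\bigr|\le C\bigl(|z|^{\gamma}+|z'|^{\gamma}\bigr)|z-z'|,\qquad
\bigl\|\sigma(z)-\sigma(z')\bigr\|^2\le C\bigl(|z|^{\gamma}+|z'|^{\gamma}\bigr)|z-z'|^2,
\]
valid for $\gamma\in(0,1]$, and use them with $z=V-V_*$, $z'=\tilde V-\tilde V_*$, $|z-z'|\le|V-\tilde V|+|V_*-\tilde V_*|$; (iv) take expectations, use that $R_t$ realizes the optimal cost so that $\mathbb{E}|V_*-\tilde V_*|^p=\mathcal{T}_p(f_t,\tilde f_t)=\mathbb{E}|V_t-\tilde V_t|^p$ as well, bound the $|z|^\gamma+|z'|^\gamma$ factors by $1+|V|^\gamma+|V_*|^\gamma+|\tilde V|^\gamma+|\tilde V_*|^\gamma$, and apply H\"older's inequality (splitting the exponents as $\gamma+(p-2)+1+1=p+\gamma$ where appropriate, or using Young with weights tuned to $p$ and $p+\gamma$) to control everything by
\[
\frac{d}{dt}\mathbb{E}|V_t-\tilde V_t|^p \le C\bigl(1+M_{p+\gamma}(f_t+\tilde f_t)\bigr)\bigl(1+\sup_{s\le t}M_p(f_s+\tilde f_s)\bigr)^{\!c}\,\mathbb{E}|V_t-\tilde V_t|^p;
\]
(v) conclude by Gr\"onwall's lemma, absorbing the $\sup_s M_p$ factor into the prefactor $[1+\sup_{s\in[0,t]}M_p(f_s+\tilde f_s)]$ and integrating the time-dependent factor to produce $[1+\int_0^t(1+M_{p+\gamma}(f_s+\tilde f_s))ds]$, which gives exactly the claimed exponential bound. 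A technical wrinkle is that the coupling $R_t$ must be chosen measurably in $t$ and one works with a fixed-point/limiting argument to guarantee that such a coupled pair of processes exists; this is handled exactly as in \cite{MR2502525,FOURNIER20211961} by first proving the estimate for a smoothed equation and passing to the limit, or by a direct martingale-problem construction.

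I expect the main obstacle to be step (iii), the quantitative Lipschitz estimate for the square-root matrix $\sigma(z)$ of $a(z)$: while $a_{ij}$ itself is easily seen to be locally Lipschitz with the stated weight (since $\nabla a$ grows like $|z|^{1+\gamma}$), the square root of a matrix-valued function is not automatically as regular as the function, and one must exploit the special structure $a(z)=|z|^{\gamma+2}\Pi_{z^\perp}$ (a scalar times an orthogonal projection) to write down $\sigma(z)=|z|^{\gamma/2+1}\,\Pi_{z^\perp}$ — note $\Pi_{z^\perp}$ is already a symmetric square root of itself up to the scalar — and then estimate $\|\Pi_{z^\perp}-\Pi_{z'^\perp}\|\le 2|z-z'|/\max(|z|,|z'|)$ together with the H\"older continuity of $t\mapsto t^{\gamma/2+1}$ when $\gamma\in(0,1]$ guarantees $\gamma/2+1\ge 1$, hence Lipschitz on bounded sets but with a factor that must be paired correctly against $|v|^\gamma$-type weights. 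Carefully tracking which of the two factors $|z|^{\gamma}+|z'|^{\gamma}$ and $|z-z'|$ absorbs the singularity near $z=0$ — and checking that no genuine singularity survives because $\gamma>0$ — is where the computation needs the most care; everything downstream is a bookkeeping exercise with H\"older's inequality and Gr\"onwall.
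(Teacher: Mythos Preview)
The paper does not give its own proof of this theorem: it is quoted verbatim as a result of Fournier--Heydecker \cite{FOURNIER20211961} in the survey of previous results (Section~1.5), with no argument supplied. Your outline is essentially the coupling/SDE strategy that Fournier--Heydecker (building on Fournier--Gu\'erin \cite{MR2502525}) use to prove it, so there is nothing in the present paper to compare your proposal against.

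One technical correction to your sketch: the choice $\sigma(z)=|z|^{1+\gamma/2}\Pi_{z^\perp}$ does \emph{not} yield the needed Lipschitz-type estimate, because $\|\Pi_{z^\perp}-\Pi_{z'^\perp}\|$ behaves like $|z-z'|/\min(|z|,|z'|)$ rather than $|z-z'|/\max(|z|,|z'|)$, and this produces an uncontrollable singularity when one of $z,z'$ is small. The actual argument in \cite{FOURNIER20211961} (and already in Fournier--Gu\'erin) uses a more carefully chosen square root that depends on \emph{both} $z$ and $z'$ simultaneously---one rotates so that $z$ and $z'$ are aligned before taking the square root---precisely to kill this singularity. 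This is the genuine technical heart of the proof, and your step~(iii) as written would fail without that refinement.
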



We also introduce results for soft potentials briefly.  Theorem 3 of \cite{MR2502525} gives uniqueness of weak solutions for any $f_0 \in \mathcal{P}_2(\mathbb{R}^3)$ for the moderately soft potentials $\gamma\in(-2,0)$.  Existence of weak solution is also given as \cite[Corollary~4]{MR2502525} for $\gamma \in (-2,0)$ and $f_0 \in \mathcal{P}_q(\mathbb{R}^3)$ for sufficiently large $q$ and $\text{H}(f_0) < \infty$. 
Lastly, for the very soft potentials $\gamma<-2$, we also introduce a recent result by Guillen and Silvestre on the global wellposedness including the Coulombic case:
\begin{theorem}[Guillen-Silvestre \cite{2311.09420}]\label{GSglobal}
  Let $\gamma\in[-3,1].$ Let $f_0$ be initial data in $C^1$ and is bounded by a Maxwellian. Assume also that the initial Fisher information is bounded. Then there is a unique global classical solution $f$ to the Landau equation \eqref{eq:L} with the initial data $f(0,v)=f_0.$ For any positive time, this function $f$ is strictly positive, and bounded above by a Maxwellian. The Fisher information $i(f)$ is non-increasing.
\end{theorem}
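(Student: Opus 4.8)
The strategy of Guillen and Silvestre rests on a single new monotone quantity: the Fisher information $i(f)=\int_{\mathbb{R}^3}\frac{|\nabla f|^2}{f}\,dv$. The first and decisive step is to prove the \emph{a priori} inequality $\frac{d}{dt}i(f_t)\le 0$ for smooth, strictly positive, rapidly decaying solutions of \eqref{eq:L}. Writing $g=\log f$ and inserting $\partial_t f=\partial_i(\overline{a_{ij}}\partial_j f-\overline{b_i}f)$, one differentiates $i(f_t)$ in time and integrates by parts; what results is a quadratic form in $\nabla^2 g$ (together with lower-order terms) integrated against $f(v)f(v_*)\,a_{ij}(v-v_*)$. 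After symmetrizing in $v\leftrightarrow v_*$, the whole matter reduces to a pointwise algebraic inequality: a $3\times3$ positivity statement for the degenerate matrix $a_{ij}(z)=(\delta_{ij}-z_iz_j/|z|^2)|z|^{\gamma+2}$ contracted against the second-order data of $g$. Proving this algebraic lemma --- which genuinely exploits the projection structure of $a$ and holds for the whole range $\gamma\in[-3,1]$ --- is the technical heart of the argument and is not a routine computation.

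Granting the monotonicity, I would build the solution by an approximation scheme (mollified initial data, regularized/truncated kernel) arranged so that the Fisher-information bound survives at the approximate level, giving $i(f^\varepsilon_t)\le i(f_0)$ uniformly. Since $i(f)=4\|\nabla\sqrt f\|_{L^2}^2$, this bound together with conservation of mass and energy controls $\sqrt f$ in $H^1(\mathbb{R}^3)$, hence $f$ in $L^3(\mathbb{R}^3)$ and, interpolating with the weighted-$L^1$ moments, in weighted $L^p$ spaces; this is exactly the integrability that closes the a priori estimates on the coefficients $\overline{a},\overline{b},\overline{c}$ uniformly in time, even in the very soft range where the classical weighted-$L^p$ machinery fails. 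Passing to the limit with the compactness this provides yields a global weak solution, and local parabolic theory --- once ellipticity of $\overline{a_{ij}}$ is in hand, as in Proposition~\ref{prop.1.3} for hard potentials and its soft-potential analogues --- upgrades it to a classical solution on $(0,\infty)$. A continuation argument then shows no norm can blow up in finite time, so the solution is global.

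For the qualitative conclusions I would argue as follows. \emph{Maxwellian upper bound:} freezing the coefficients along the solution, $\partial_t f=\overline{a_{ij}}\partial_i\partial_j f-\overline{c}\,f$ is a linear parabolic equation for which a Gaussian $M_T$ with $T$ chosen large (and, if necessary, a slowly growing temperature) is a supersolution; the comparison principle then gives $f(t,\cdot)\le CM_T$ for $t>0$. \emph{Strict positivity:} the a priori bounds make the operator uniformly parabolic with bounded measurable coefficients on compact sets, so the weak Harnack inequality applies and, since $f\not\equiv0$, produces a strictly positive --- in fact, after an expanding-barrier argument, Gaussian --- lower bound for every $t>0$. \emph{Uniqueness:} within the class of solutions bounded above by a Maxwellian and with bounded Fisher information, the coefficients $\overline{a_{ij}},\overline{c}$ are bounded and H\"older continuous and depend in a Lipschitz way on $f$, so subtracting two such solutions and running a Gr\"onwall estimate in a suitable weighted $L^2$- or Wasserstein-type norm forces them to coincide; monotonicity of $i(f)$ is the statement already proved in step one.

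The main obstacle is unambiguously the monotonicity $\frac{d}{dt}i(f_t)\le 0$: every coercive estimate in the proof is squeezed out of this one inequality, and establishing it requires the hand-proven algebraic positivity lemma for the Landau matrix rather than any soft integration by parts. A secondary difficulty is making the approximation, passage to the limit, and continuation fully rigorous uniformly down to $\gamma=-3$, where the kernel is most singular and one must use the sharpest available forms of the moment and integrability estimates to keep the coefficients under control.
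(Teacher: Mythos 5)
This theorem is not proved in the paper under review; it is quoted verbatim (with citation) from Guillen--Silvestre \cite{2311.09420} as background for the discussion of soft potentials, so there is no ``paper's own proof'' to compare against. That said, your sketch does capture the broad architecture of the Guillen--Silvestre argument --- Fisher-information monotonicity as the master a priori estimate, the consequent $H^1$ control of $\sqrt f$ and hence $L^3$ integrability closing the coefficient bounds uniformly in time and down to $\gamma=-3$, approximation plus compactness, and a parabolic bootstrap with comparison/Harnack for the Gaussian bounds and positivity.

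Two points where your description drifts from what Guillen and Silvestre actually do. First, the heart of the monotonicity proof is not a pointwise ``$3\times3$ algebraic positivity lemma'' for $a_{ij}(z)$: the mechanism is a lift to $\mathbb{R}^6$ --- one considers $F(v,v_*)=f(v)f(v_*)$, recasts the Landau flow as a degenerate diffusion on $\mathbb{R}^6$, and then the decay of $i(f)$ reduces to a Bakry--\'Emery-type $\Gamma_2$ computation whose sign hinges on a sharp \emph{functional} (Poincar\'e/log-Sobolev-type) inequality on spheres, not a frozen-coefficient matrix inequality. That spectral inequality, with the right constant, is the technical crux. Second, the uniqueness statement in \cite{2311.09420} is established within the class controlled by the Fisher information and the Maxwellian envelope and is somewhat more delicate than a generic Gr\"onwall argument in a weighted norm, because the coefficients $\overline{a_{ij}}$ in the very soft range are only controlled through the $L^3$-type bounds furnished by the Fisher information itself. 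If you intend to reconstruct the proof in earnest rather than outline it, those are the two places where the real work sits.
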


This paper builds on these foundational results by specifically addressing the existence and regularity of axisymmetric measure-valued weak solutions when the initial data exhibits axial symmetry. In the next section, we introduce a key lemma which works as a crucial element in proving our main theorem.
\section{Key Lemma for our result}
A critical element in proving the instantaneous formation of analyticity in Theorem \ref{main thm}, is the following lemma:
\begin{lemma}\label{lem_line}
Let $\gamma\in[-3,1]$. Let $f\in L^1_{\text{loc}}((0,\infty);\mathcal{P}_{2+\gamma}(\mathbb{R}^3))$ be any weak solution to the Landau equation in the sense of Definition \ref{weak def} which is not a single Dirac mass. Let $\ell$ be any given line in $\mathbb{R}^3$. Then there is no time interval $[t_1,t_2]\in(0,\infty)$ with $t_1<t_2$ such that $f(t,\cdot)$ is concentrated on a line $\ell\in\mathbb{R}^3$ for a.e.  $t\in[t_1,t_2].$
\end{lemma}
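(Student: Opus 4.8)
The plan is to argue by contradiction, extracting information from the weak formulation of Definition~\ref{weak def} tested against the squared distance to $\ell$, exploiting the exact algebra of the Landau kernel. Suppose there were an interval $[t_1,t_2]\subset(0,\infty)$ and a line $\ell=\{a+ue:u\in\mathbb{R}\}$, $|e|=1$, with $f_t$ concentrated on $\ell$ for a.e.\ $t\in[t_1,t_2]$. Write $\Pi=\mathrm{Id}-e\otimes e$ for the orthogonal projection onto $e^{\perp}$ and take $\varphi(v)=|\Pi(v-a)|^2=\mathrm{dist}(v,\ell)^2$, so that $\partial_i\partial_j\varphi=2(\delta_{ij}-e_ie_j)$ and $\partial_i\varphi(v)=2(\Pi(v-a))_i$. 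Since $\varphi$ is unbounded, the first task is to justify inserting it into Definition~\ref{weak def}: one uses truncations $\varphi_R=\chi_R\varphi$ with $\chi_R$ a smooth cutoff on $B_R$; the commutator errors are supported on $\{|v|\sim R\}$ and, via $|a_{ij}(z)|\le|z|^{2+\gamma}$ and $|b_i(z)|\le|z|^{1+\gamma}$, are dominated by $M_{2+\gamma}(f_u)$ (and, for $\gamma<-1$, additionally by $\sup_v\int|v-v_*|^{\gamma+1}f_u(dv_*)$, which is exactly what the extra $\mathcal{J}_{\gamma+1}$ condition provides), hence vanish as $R\to\infty$ by dominated convergence in $u\in[t_1,t_2]$. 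This legitimizes the weak identity for $d(t):=\int_{\mathbb{R}^3}\mathrm{dist}(v,\ell)^2 f_t(dv)$ and simultaneously shows $d$ is absolutely continuous on $[t_1,t_2]$; since $d=0$ a.e.\ there, in fact $d\equiv0$ on $[t_1,t_2]$.

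The heart of the matter is the computation on line-supported configurations. For $f_u$-a.e.\ $v,v_*$ one has $v,v_*\in\ell$, hence $z:=v-v_*=(u'-u'_*)e\parallel e$, so $\Pi z=0$ and the drift term drops out, $b_i(z)\partial_i\varphi(v)=-4|z|^\gamma(\Pi z)\cdot(v-a)=0$, while the diffusion term is a strictly positive multiple of the squared separation, $a_{ij}(z)\partial_i\partial_j\varphi=2|z|^{\gamma+2}\,\mathrm{tr}\,\Pi=4|u'-u'_*|^{\gamma+2}$ (here $u',u'_*$ are the coordinates of $v,v_*$ along $\ell$). Substituting into the weak identity and writing $\mu_u$ for the push-forward of $f_u$ to $\mathbb{R}$ along $\ell$, I obtain $0=d(t_2)-d(t_1)=\int_{t_1}^{t_2}4J(u)\,du$ with $J(u):=\iint|u'-u'_*|^{\gamma+2}\mu_u(du')\mu_u(du'_*)\ge0$, so $J(u)=0$ for a.e.\ $u\in[t_1,t_2]$. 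For $\gamma\in[-3,-2]$ this is already a contradiction, because $|u'-u'_*|^{\gamma+2}$ never vanishes, whence $J(u)>0$ for every line-supported probability measure (in this range such measures also fail the $\mathcal{J}_{\gamma+1}$ condition anyway). For $\gamma\in(-2,1]$ it forces $\mu_u$ to be a single Dirac mass, i.e.\ $f_u=\delta_{v(u)}$ with $v(u)\in\ell$ for a.e.\ $u\in[t_1,t_2]$.

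To finish, pick $s_0\in(t_1,t_2)$ with $f_{s_0}=\delta_{v_0}$, $v_0:=v(s_0)$. The point is that the second moment about $v_0$ is conserved along every weak solution --- this is the energy identity \eqref{energy con} combined with conservation of mass and momentum, made rigorous by the same truncation procedure (now controlled through $\iint|v-v_*|^{2+\gamma}f_uf_u\lesssim M_{2+\gamma}(f_u)$, resp.\ through the $\mathcal{J}_{\gamma+1}$ bound when $\gamma<-1$): testing against $\psi(v)=|v-v_0|^2$ and symmetrizing in $(v,v_*)$, the diffusion and drift contributions equal $4\iint|v-v_*|^{\gamma+2}f_uf_u$ and $-4\iint|v-v_*|^{\gamma+2}f_uf_u$ and cancel. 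Hence $t\mapsto\int_{\mathbb{R}^3}|v-v_0|^2 f_t(dv)$ is constant on $(0,\infty)$, and since it vanishes at $t=s_0$ we get $f_t=\delta_{v_0}$ for a.e.\ $t$, contradicting the hypothesis that $f$ is not a single Dirac mass. I expect the genuinely delicate point to be the truncation argument rendering $\mathrm{dist}(\cdot,\ell)^2$ and $|v-v_0|^2$ admissible test functions in the measure-valued weak formulation; conceptually the mechanism is robust on the whole range $\gamma\in[-3,1]$ --- line-concentration forces the transverse second moment to stay zero, the positivity of $|z|^{\gamma+2}$ then forces the profile to be an instantaneous Dirac mass, and conservation of the centered energy propagates this to all positive times.
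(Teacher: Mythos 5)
Your proof is correct and uses the same essential mechanism as the paper's --- testing against (the square of) the transverse distance to $\ell$ and exploiting the algebraic degeneracy of $a_{ij}$ on line-supported pairs $v,v_*\in\ell$. The differences are in the execution, and the comparison is instructive. You take $\varphi=\mathrm{dist}(\cdot,\ell)^2$ directly, which is unbounded and hence not an admissible test function in Definition~\ref{weak def}; you must then run a truncation argument, controlled for $\gamma>-1$ through $M_{2+\gamma}$ and for $\gamma<-1$ through the $\mathcal{J}_{\gamma+1}$ hypothesis. The paper instead takes $\varphi=\tfrac14(v_2^2+v_3^2)\,\sigma(v_2,v_3)$ with $\sigma$ a compactly supported radial cutoff equal to $1$ near the origin of the transverse plane: because the solution is supported on $\ell$ throughout the interval, only the values and derivatives of $\varphi$ \emph{on $\ell$} enter the weak identity, so a compactly supported $\varphi$ with $\varphi|_\ell=0$, $\partial_1\varphi\equiv 0$ and $(\partial_2^2+\partial_3^2)\varphi|_\ell=1$ does exactly the same job without any truncation. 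This is shorter and eliminates your ``delicate point.'' On the other side, for $\gamma\in(-2,1]$ the vanishing of $\int_{t_1}^{t_2}\!\iint|v-v_*|^{2+\gamma}f_t\,f_t\,dt$ only forces $f_t$ to be a Dirac mass on the interval; you supply the additional argument that conservation of $\int|v-v_0|^2f_t\,dv$ (which follows from the weak identity with $\psi=|v-v_0|^2$ after the same truncation and the symmetrization that makes the drift and diffusion contributions cancel) propagates this to $f\equiv\delta_{v_0}$ globally, contradicting the hypothesis. The paper's proof is terser at this step, asserting the contradiction directly from ``$f$ is not a Dirac mass''; your propagation argument makes that step fully explicit and is a genuine improvement in rigor. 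In short: the paper's compactly supported test function buys a cleaner derivation of the identity $\int_{t_1}^{t_2}\!\iint|v-v_*|^{2+\gamma}f_t\,f_t\,dt=0$, while your centered-energy argument buys a cleaner deduction of the contradiction from it; combining the two would give the tightest version of the proof.
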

\begin{remark}
Here are some remarks on the lemma above:
    \begin{enumerate}
        \item Lemma \ref{lem_line} is valid not only for the hard potentials $\gamma\in(0,1]$ but also for the soft potentials and the Maxwellian-molecule cases $\gamma\in[-3,0].$
        \item Lemma \ref{lem_line} is for a given fixed line $\ell$.
        \item The proof of Lemma is direct and is much simpler than that of \cite[Lemma 16]{FOURNIER20211961}. In addition, the requirement $\gamma\in(0,1]$ and $f_0\in\mathcal{P}_4(\mathbb{R}^3)$ is not necessary here. Lemma 16 of \cite{FOURNIER20211961} goes by:
\begin{lemma}[Lemma 16 of \cite{FOURNIER20211961}]
    Let $\gamma \in (0,1]$ and $f_0 \in \mathcal{P}_4(\mathbb{R}^3)$ which is not a single Dirac mass, and $f$ be a weak solution to the Landau equation starting from $f_0$. Then for any given $t_0>0$, there exists $t \in (0,\delta)$ such that $f(t)$ is not concentrated on any line $\ell \subset \mathbb{R}^3$.
\end{lemma}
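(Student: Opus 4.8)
The plan is to test the weak formulation of \eqref{eq:L} against a bounded truncation of the squared distance to $\ell$, exploiting the fact that $a_{ij}(z)$ degenerates exactly in the direction of $z$. By the invariance of the Landau operator under rigid motions — the kernel depends only on $v-v_*$, and $a_{ij}(Rz)=R_{ik}R_{jl}a_{kl}(z)$ for $R\in SO(3)$ because $a_{ij}(z)=(\delta_{ij}-\hat z_i\hat z_j)|z|^{\gamma+2}$ — I may assume $\ell=\{(0,0,z):z\in\mathbb{R}\}$; pushing $f$ forward by a rigid motion carries weak solutions to weak solutions, Dirac masses to Dirac masses, and line concentration to line concentration.

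Suppose for contradiction that $f_t$ is concentrated on $\ell$ for a.e.\ $t\in[t_1,t_2]$ with $0<t_1<t_2$. For every $\varphi\in C^2_b(\mathbb{R}^3)$ the map $t\mapsto\int\varphi\,f_t(dv)$ is absolutely continuous (by Definition \ref{weak def} it is a constant plus a time integral of an $L^1_{\mathrm{loc}}$ quantity), so $f$ has a weak-$*$ continuous representative in $t$; as $\{\,\mathrm{supp}\,f_t\subset\ell\,\}$ is a weak-$*$ closed condition, the concentration then holds for \emph{all} $t\in[t_1,t_2]$, and we may write $f_t=g_t\otimes\delta_0\otimes\delta_0$ with $g_t\in\mathcal{P}(\mathbb{R})$ the image of $f_t$ under $v\mapsto v_3$. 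Now fix $\psi\in C^2_b(\mathbb{R})$ with $\psi(s)=s^2$ for $|s|\le1$ and apply the weak formulation to $\varphi(v):=\psi(v_1)$ on $[t_1,t_2]$ (subtracting the identity of Definition \ref{weak def} at $T=t_1$ from the one at $T=t_2$). On $\mathrm{supp}\,f_t\subset\ell$ we have $v_1=0$, hence $\partial_1\varphi(v)=\psi'(0)=0$ and $\partial_1^2\varphi(v)=\psi''(0)=2$ with all other partials vanishing; the $b$-term therefore disappears, the left-hand side equals $\psi(0)-\psi(0)=0$, and since $v-v_*=(0,0,z-z_*)$ gives $a_{11}(v-v_*)=|z-z_*|^{\gamma+2}$ we arrive at
\[
0=2\int_{t_1}^{t_2}\!\int_{\mathbb{R}}\!\int_{\mathbb{R}}|z-z_*|^{\gamma+2}\,g_t(dz_*)\,g_t(dz)\,dt,
\]
the right-hand side being finite by the integrability built into Definition \ref{weak def} (membership in $\mathcal{P}_{2+\gamma}$, together with $\mathcal{J}_{\gamma+1}$ when $\gamma<-1$) and, in any case, nonnegative.

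Since $|z-z_*|^{\gamma+2}>0$ for $z\ne z_*$, this forces $(g_t\otimes g_t)(\{z\ne z_*\})=0$ for a.e.\ $t$, hence — again by weak-$*$ continuity — for all $t\in[t_1,t_2]$: thus $g_t=\delta_{z(t)}$ and $f_t=\delta_{p(t)}$ with $p(t)=(0,0,z(t))$. Feeding a general $\varphi\in C^2_b(\mathbb{R}^3)$ into the weak formulation on $[t_1,t]$ and using $a_{ij}(0)=b_i(0)=0$ yields $\varphi(p(t))=\varphi(p(t_1))$ for every such $\varphi$, so $p(\cdot)$ is constant and $f_t\equiv\delta_p$ on $[t_1,t_2]$ for a fixed $p$. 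Because a Dirac mass is the unique weak solution issued from Dirac data, $f_t=\delta_p$ for all $t\ge t_1$; and, since mass, momentum $\int v\,f_t(dv)$ and energy $E(f_t)$ are conserved between positive times — available in the hard-potential case through the moment bounds of Theorem \ref{Villani theorem} — one has $\int v\,f_t(dv)=p$ and $E(f_t)=\frac12|p|^2$ for every $t>0$, whence $\int|v-p|^2 f_t(dv)=2E(f_t)-2p\cdot\int v\,f_t(dv)+|p|^2=0$ also for $0<t<t_1$, so $f_t=\delta_p$ for every $t>0$ (and, by weak-$*$ continuity up to $t=0$, at $t=0$), contradicting that $f$ is not a single Dirac mass.

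I expect the only genuinely delicate point to be this closing step — upgrading ``$f$ is a fixed Dirac mass on the positive-length interval $[t_1,t_2]$'' to ``$f$ is a single Dirac mass''. Forward propagation is immediate from uniqueness of the Dirac solution, but the backward step rests on conservation of energy and momentum, which is clean for hard potentials and needs a separate comment in the soft range; under the strongest reading of ``not a single Dirac mass'' one must still invoke such backward information, whereas under the weaker reading ``$f_t$ is a non-Dirac measure for some $t>0$'' the contradiction is already reached at $t=t_1$. Everything else is routine: the unbounded weights $v_1^2$ (and $v_i$, $|v|^2$) are admissible because after truncation their relevant derivatives are evaluated only on $\mathrm{supp}\,f_t$, and the passage from ``a.e.\ $t$'' to ``all $t$'' is nothing more than weak-$*$ continuity of $t\mapsto f_t$.
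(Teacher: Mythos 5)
Your argument establishes a correct but strictly weaker statement than the one asserted: it rules out concentration of $f_t$ on a \emph{fixed} line over a time interval, but Lemma~16 of \cite{FOURNIER20211961} asserts that for some small $t$ the measure $f(t)$ is not concentrated on \emph{any} line, and the negation of that allows the line to vary with $t$. Concretely, you negate the conclusion as ``$f_t$ is concentrated on $\ell$ for a.e.\ $t\in[t_1,t_2]$'' for a single line $\ell$, which you then normalize once and for all; the correct negation is ``for every $t\in(0,t_0)$ there exists some line $\ell_t$ (passing through the conserved barycenter but possibly rotating in $t$) on which $f_t$ is concentrated.'' Your test function $\varphi(v)=\psi(v_1)$ is rigidly adapted to the chosen axis, so if $\ell_t$ rotates, the first derivatives $\partial_i\varphi$ no longer vanish on $\mathrm{supp}\,f_t$ for $t\neq t_1$, the $b$-term no longer drops out, the left-hand side $\int\varphi\,f_{t_2}-\int\varphi\,f_{t_1}$ is not $0$, and the key identity $0=\int\!\int|z-z_*|^{\gamma+2}g_t(dz)g_t(dz_*)$ is not obtained. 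This is not a cosmetic point: the paper itself proves only the fixed-line version as its own Lemma~\ref{lem_line}, and the remark immediately following it states explicitly that Lemma~16 of \cite{FOURNIER20211961} ``is more general as they prevent the support also from the rotating line,'' which is exactly why this paper resorts to axial symmetry of the solution (ruling out a tilted, rotating support line a priori) rather than reproving Lemma~16.

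Modulo this gap, your fixed-line computation is sound and mirrors the paper's proof of Lemma~\ref{lem_line}: you isolate $a_{11}=|z-z_*|^{\gamma+2}$ where the paper isolates $a_{22}+a_{33}$ via $\varphi=\tfrac14(v_2^2+v_3^2)\sigma$, but the mechanism — degeneracy of $a_{ij}$ along the line together with a transverse quadratic test function — is the same. Your closing upgrade, namely that ``$f_t=\delta_p$ on a positive-length interval'' forces $f\equiv\delta_p$ for all $t\ge0$ by combining forward uniqueness with backward use of conserved momentum and energy, is in fact a welcome refinement: the paper's proof of Lemma~\ref{lem_line} simply declares the resulting identity a contradiction ``because $f$ is not a Dirac mass'' without addressing why $f_t$ cannot be a Dirac mass only on $[t_1,t_2]$. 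But to prove the stated Lemma~16 you would need an additional argument handling a time-dependent direction — e.g.\ a test function covariant under the motion of $\ell_t$, or a moment-tensor argument showing a rank-one covariance cannot persist — and that is precisely the part your proposal does not supply.
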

\item Our lemma is for a fixed line $\ell \in \mathbb{R}^3.$ Even if the support instantaneously spans away from the line $\ell$ it is still possible that the support is in a different line (e.g., rotating lines) and this does not imply that the support contains three points not on a line. (In this sense, Lemma 16 of \cite{FOURNIER20211961} is more general as they prevent the support also from the rotating line.) Therefore, we consider the axisymmetric solution for the proof of the existence so that the support instantaneously contain a triangle. 
    \end{enumerate}
\end{remark}
\begin{proof}[Proof of Lemma \ref{lem_line}]Without loss of generality, we assume that $f$ is concentrated on the line $\ell = \{ v \in \mathbb{R}^3;v_2=v_3=0 \}$ for all $t \in [0,T]$ for some $T>0$. From the weak formula, we have
\begin{equation*}
	\begin{gathered}
		\int_{\mathbb{R}^3} \varphi(v) f_T(dv) - \int_{\mathbb{R}^3} \varphi(v) f_0(dv) \\
		= \int_0^T \int_{\mathbb{R}^3} \int_{\mathbb{R}^3} a_{ij}(v-v_*) \partial_i \partial_j \varphi(v) f_t(dv_*) f_t(dv) \mathrm{d}t \\
		+ 2 \int_0^T \int_{\mathbb{R}^3} \int_{\mathbb{R}^3} b_{i}(v-v_*) \partial_i \varphi(v) f_t(dv_*) f_t(dv) \mathrm{d}t =: I_{ij} + II_{i}.
	\end{gathered}
\end{equation*}
From the assumption, $f$ vanishes on $\mathbb{R}^3 \setminus \ell$. Recalling $$a_{ij}(v-v_*) := \left( \delta_{ij} - \frac {(v-v_*)_i(v-v_*)_j}{|v-v_*|^2} \right)|v-v_*|^{2+\gamma},$$ we can easily show that $I_{ij} = 0$ for $i \not= j$ and $I_{11} = 0$. Thus, we have $$I_{ij} = I_{22} + I_{33} = 2\int_0^T \int_{\mathbb{R}^3} \int_{\mathbb{R}^3} |v-v_*|^{2+\gamma} \Delta_h\varphi(v) f_t(dv) f_t(dv_*) \,\mathrm{d}t.$$ On the other hand,  for $v,v_* \in \ell$, $$b_i(v-v_*) = -2(v-v_*)_i|v-v_*|^{\gamma} = 0, \qquad i=2,\,3.$$ This implies $$II_{i} = II_1 = -4 \int_0^T \int_{\mathbb{R}^3} \int_{\mathbb{R}^3} (v-v_*)_1|v-v_*|^{\gamma} \partial_1 \varphi(v) f_t(dv_*) f_t(dv) \,\mathrm{d}t.$$ We arrived at 
	\begin{equation*}
	\begin{gathered}
		\int_{\mathbb{R}^3} \varphi(v) f_T(dv) - \int_{\mathbb{R}^3} \varphi(v) f_0(dv) \\
		= 2\int_0^T \int_{\mathbb{R}^3} \int_{\mathbb{R}^3} |v-v_*|^{2+\gamma} \Delta_h \varphi(v) f_t(dv) f_t(dv_*) \,\mathrm{d}t \\
		-4 \int_0^T \int_{\mathbb{R}^3} \int_{\mathbb{R}^3} (v-v_*)_1|v-v_*|^{\gamma} \partial_1 \varphi(v) f_t(dv_*) f_t(dv) \,\mathrm{d}t.
	\end{gathered}
\end{equation*} We take $\varphi = \frac {1}{4}(v_2^2 + v_3^2) \sigma(v_2,v_3)$, where $\sigma \in C^{\infty}_c(\mathbb{R}^2)$ is a smooth radial function such that $\sigma = 1$ on $B(0;\frac{1}{2})$ and $\sigma = 0$ on $\mathbb{R}^3 \setminus B(0;1)$. Then, $\partial_1 \varphi(v) = 0$ and $\partial_2 \varphi(v) = \partial_3 \varphi(v) = 0$ on $B(0;\frac{1}{2})$. Thus, our assumption gives that
	\begin{equation*}
	\begin{gathered}
		0 = \int_0^T \int_{\mathbb{R}^3} \int_{\mathbb{R}^3} |v-v_*|^{2+\gamma} f_t(dv) f_t(dv_*) \,\mathrm{d}t
	\end{gathered}
\end{equation*} which is a contradiction due to $f \in L^{\infty}((0,\infty);\mathcal{P}_2(\mathbb{R}^3))$ which is not a Dirac mass. This completes the proof. 
\end{proof}

In the next section, we prove our main theorem.

\section{Proof of Theorem \ref{main thm}}
This section is devoted to proving the main theorem, Theorem \ref{main thm}, on the existence of axisymmetric solutions to \eqref{eq:L} and its regularity. To this end, we first regularize the equation. We will consider the case with $p\ge 2$. We will also consider linear approximating equations to the regularized equation and consider the existence of iterated sequence of solutions to each linearized equation. On the other hand, we will show that the regularized operators commute with a rotation matrix $T$. We introduce each step below more in detail.
\subsection{Regularized System}
We first consider a regularized equation of the Landau equation \eqref{eq:L} as follows. Fix a constant $p\ge 2$ and let the initial profile $f_0\in \mathcal{P}_p(\mathbb{R}^3)$ be axially symmetric. Without loss of generality, let the initial total mass $M(f_0)$ be equal to 1. We now consider a regularized equation of \eqref{eq:L} as follows:\begin{equation}\label{eq.reg eq}
\partial_t f^\varepsilon = Q^\varepsilon(f^\varepsilon, f^\varepsilon) + \varepsilon  \Delta_v f^\varepsilon.
\end{equation}See also \cite{MR1055522,MR1737547} for similar types of regularization. Here, the collision operator $Q$ is now regularized as \begin{equation}\label{def_bar} 
Q^{\varepsilon}(g,f) := \overline{a_{ij}}^{\varepsilon,g} \partial_i \partial_j f - \overline{c}^{\varepsilon,g} f,
\end{equation} where $\overline{a_{ij}}^{\varepsilon,g} := a_{ij}^{\varepsilon} * g$, $\overline{c}^{\varepsilon,g}  := \partial_i \partial_j a_{ij}^{\varepsilon} * g$, and \begin{equation}
    \label{def.aijep}a_{ij}^{\varepsilon}(z) := \left( \delta_{ij} - \frac {z_iz_j}{|z|^2} \right)|z|^{2+\gamma} \mu_\varepsilon(|z|),
\end{equation} for some smooth function on $\mathbb{R}^3\setminus \{0\}$ such that
\begin{equation}\label{def.muep}
	\begin{aligned}
		\mu_\varepsilon(|z|) &=\begin{cases}
		    |z|^{-\gamma} \qquad\mbox{ on } B\left(0;\frac {\varepsilon}{2}\right), \\
            1 \qquad\qquad\mbox{ on } B(0;\varepsilon^{-1}) \setminus B(0; \varepsilon), \\
            |z|^{-2-\gamma} \qquad\mbox{ on } \mathbb{R}^3 \setminus B(0; 2\varepsilon^{-1}).
		\end{cases} 
	\end{aligned}
\end{equation} 
Indeed, in \cite[Eq.(23)]{MR1055522} we can see a simpler regularization of the equation as 
\begin{equation}\label{arsenev eq}\partial_t f^\varepsilon = Q(f^\varepsilon, f^\varepsilon) + \varepsilon \Delta_v f^\varepsilon ,\end{equation}even without regularizing the collision cross section. This is also a possible alternative. This also guarantees the strong ellipticity with the modulus of ellipticity is bounded from below by $\varepsilon.$ Then each linear approximating equation of this nonlinear equation has a unique classical solution given that the solution is bounded from above by $e^{a|v|^2}$ due to the classical parabolic theory \cite{ladyzhenskaia1968linear,MR199563}. More precisely, it is also possible to construct a sequence of linear solutions in the H\"older class $\mathcal{H}^{l/2,l}_{t,v}$ for $l\in (1,2)$ given that the initial data is in $\mathcal{H}^{l}_{v}$ and that the coefficients of the operator are in $\mathcal{H}^{l/2,l}_{t,v}$. In this paper, though working with \eqref{arsenev eq} is perfectly fine with the same proof below, we follow the regularization \eqref{eq.reg eq} instead and work with smoother coefficients. 
\subsubsection{Regularized initial profiles}
We also regularize the initial profile. We slightly modify the cutoff process done in \cite[page 1978, step 1]{FOURNIER20211961} by mollifying the cutoff initial data in addition. For a given axisymmetric initial data, we define for any given $n_0 \in \mathbb{N}$ a cutoff of the total mass $\alpha_{n} := \int_{\mathbb{R}^3} \mathbf{1}_{|v| \leq n}f_0(dv)$ for $n\ge n_0$, and define
\begin{equation}
    \label{initial}f^{n}_0 := (\alpha_n^{-1} \mathbf{1}_{|v| \leq n} f_0(dv)) * \sigma_{n}, \qquad n \geq n_0,
\end{equation} where $\sigma_{n} \in C^{\infty}_c(\mathbb{R}^3)$ is a non-negative radial function of approximation to the identity such that $\int_{\mathbb{R}^3} \sigma_{n}(v) \,\mathrm{d}v = 1$ for all $n \geq n_0$ and $\sigma_{n} \to \delta$ as $n \to \infty$.  Note that $M(f^n_0) = M(f_0) = 1$ and $E(f^n_0) \to E(f_0)$ as $n \to \infty,$ but $\text{H}(f^n_0) \to \infty$ could occur for some $f_0 \in \mathcal{P}_p(\mathbb{R}^3).$

\subsubsection{Linear approximating equation}
We first prove the existence of axisymmetric solutions to \eqref{eq.reg eq} for each fixed $n\ge n_0$, following the classical proofs \cite{MR1055522,MR1737547}. To this end, we consider the following linear approximating equation with the initial data $f^{n}_0(v) \in C^{\infty}_c(\mathbb{R}^3)$ defined in \eqref{initial}:
\begin{equation}\label{3.5}
\begin{aligned}
    \partial_t f &= Q^{\varepsilon}(g,f) + \varepsilon \Delta f =\overline{a_{ij}}^{\varepsilon,g} \partial_i \partial_j f - \overline{c}^{\varepsilon,g} f+ \varepsilon\Delta f .
    \end{aligned}
\end{equation} Suppose that $g$ is axisymmetric and further satisfies $M(g)=M(f^n_0)>0$, \begin{equation}\label{pi_1}
\| g \|_{L^{\infty}(0,T;L^1\cap W^{2,\infty}(\mathbb{R}^3)) \cap W^{1,\infty}(0,T;W^{-2,1}(\mathbb{R}^3)))} \leq C_1,
\end{equation} such that \begin{equation}\label{pi_2}
0 \leq g(t,v) \leq C_2 e^{-C_3|v|^2}, \qquad t \in [0,T],\, x \in \mathbb{R}^3,
\end{equation} where $C_1$, $C_2,$ and $C_3$ are positive constants. Thanks to our regularization, we can easily see that the coefficients of the operator in \eqref{3.5} are smooth (and hence are in the H\"older class $\mathcal{H}^{l_1,l_2}_{t,x}$ for any $l_1,l_2\in \mathbb{N}$). Then by the classical parabolic results \cite{MR108647, MR181836,MR1055522,MR1737547} we see that there exists a unique smooth solution $f$ to \eqref{3.5} and \eqref{initial}. We claim that this $f$ satisfies the bound \eqref{pi_2} and hence \eqref{pi_1}. The axisymmetricity of $f$ will be proved in the next subsection below.
\subsubsection{Non-negativity and upper-bound estimate}
The proof of \eqref{pi_2} for $f$ with a better Gaussian-type lower bound was given in \cite{MR1055522} via the standard maximum and minimum principles for the linear parabolic equation \eqref{arsenev eq}. Regarding the linear equation \eqref{3.5} in our case, we provide here the proof of non-negativity of $f$ and a formation of a Gaussian upper-bound given that $g$ satisfies \eqref{pi_2}.

First of all, the non-negativity of $f$ is obtained by the Feynmann-Kac formula for the solution representation of $f$ to the linear parabolic equation \eqref{3.5} with strict ellipticity given that $f$ is initially non-negative. Regarding a Gaussian upper-bound of $f,$ we consider an additional weight $ w(v) = e^{c_0 |v|^2} $ for a sufficiently large $c_0>0$ and derive an equation for \( h(t, v) = w(v) f(t, v) \). Note that the linear equation \eqref{3.5} for $f$ can be written as  
\[
\partial_t f = A_{ij} \partial_i \partial_j f + C f.
\]
Since \( h(t, v) = w(v) f(t, v) \), we observe that
\[
\partial_i f = \partial_i \left( \frac{h}{w} \right) = \frac{\partial_i h}{w} - \frac{h \partial_i w}{w^2},
\]and 
\[
\partial_i \partial_j f = \partial_i \partial_j \left( \frac{h}{w} \right) = \frac{\partial_i \partial_j h}{w} - \frac{\partial_i h \partial_j w}{w^2} - \frac{\partial_j h \partial_i w}{w^2} - \frac{h \partial_i \partial_j w}{w^2} + \frac{2h (\partial_i w)(\partial_j w)}{w^3},
\]and
\[
\partial_t f = \partial_t \left( \frac{h}{w} \right) = \frac{\partial_t h}{w}.
\]
     Spatial derivatives:
Substitute \( \partial_i \partial_j f \):
\[
A_{ij} \partial_i \partial_j f = A_{ij} \left( \frac{\partial_i \partial_j g}{w} - \frac{\partial_i g \partial_j w}{w^2} - \frac{\partial_j g \partial_i w}{w^2} - \frac{g \partial_i \partial_j w}{w^2} + \frac{2g (\partial_i w)(\partial_j w)}{w^3} \right).
\]
Thus, $h $ satisfies the following equation:
\[
\partial_t h = A_{ij} \left( \partial_i \partial_j h - \frac{\partial_i h \partial_j w}{w} - \frac{\partial_j h \partial_i w}{w} - \frac{h \partial_i \partial_j w}{w} + \frac{2h (\partial_i w)(\partial_j w)}{w^2} \right) + C h.
\]Here, for \( w(v) = e^{c_0 |v|^2} \), the derivatives of the weight $w$ are calculated as \( \partial_i w = 2 c_0 v_i w \),
 \( \partial_i \partial_j w = 2 c_0 \delta_{ij} w + 4 c_0^2 v_i v_j w \),
 \( \frac{\partial_i w}{w} = 2 c_0 v_i \),
\( \frac{\partial_i \partial_j w}{w} = 2 c_0 \delta_{ij} + 4 c_0^2 v_i v_j \),
and \( \frac{(\partial_i w)(\partial_j w)}{w^2} = 4 c_0^2 v_i v_j \).
Collecting these, we finally obtain
\begin{equation}\label{eq for h}
\partial_t h = A_{ij} \partial_i \partial_j h - 4 c_0 A_{ij} v_i \partial_j h - 2 c_0 (\operatorname{tr} A) h + C h,
\end{equation}
where \( \operatorname{tr} A = \sum_{i=1}^n A_{ii} \) is the trace of \( A_{ij} \).
Since $A=\bar{a}_{i,j}^{\varepsilon,g} +\varepsilon\text{Id},$ we have that the trace $\operatorname{tr} A $ is strictly positive. Then for a sufficiently large $c_0,$ we have $- 2 c_0 (\operatorname{tr} A)  + C <0.$ Then by the standard maximum principle for a linear parabolic equation \eqref{eq for h} with a negative coefficient of the zeroth-order term $Ch$, we have 
$h(t,v)\le \sup_{v\in\mathbb{R}^3} h(0,v) = \sup_{v\in\mathbb{R}^3} e^{c_0|v|^2} f^n_0\le C,$ for some $C$ depending only on $n$ and $M(f_0)$. Therefore, we have 
$$f(t,v)\le C e^{-c_0|v|^2}.$$ 
Together with the non-negativity of $f,$ we have \eqref{pi_2} for $f$ (for $C_2$ and $C_3$ chosen to be $C$ and $c_0$).

\subsubsection{Parabolic regularity and Schauder's fixed point theorem}Recall that we are approximating the nonlinear equation \eqref{eq.reg eq} and its solution by means of the linear approximating equations \eqref{3.5} and its solution sequence $f$. The coefficients of \eqref{3.5} are smooth for each fixed $\varepsilon>0$ and are at least in the H\"older class $\mathcal{H}^{l/2,l}_{t,v}$ for $l\in (1,2)$ for any $\varepsilon>0$. Then by the classical results on a generic linear parabolic equation \cite{MR1055522,MR199563,Ladyzhenskaya_2003,ladyzhenskaia1968linear,MR181836}, we have $f\in \mathcal{H}^{l/2+1,l+2}_{t,v}$ for $l\in (1,2)$ given that the initial data is in $\mathcal{H}^{l}_{v}$ (which is also true). In addition, we have already observed that $f$ satisfies \eqref{pi_1} and \eqref{pi_2}.  Then given that $f$ is axisymmetric (which will be proved in the next subsection), by applying Schauder’s fixed point theorem for $f=\Psi(g)$ in the solution space defined via \eqref{pi_1}-\eqref{pi_2} and $M(f)=M(f^n_0)$, we obtain an axisymmetric solution $f^{\varepsilon}$ to \eqref{eq.reg eq} that satisfies all the above properties. 
Note that $f$ is axisymmetric that will be shown later and the upper bounds of $f$ in these spaces is only depending on $C$. 
\subsection{$f^{\varepsilon}$ is axisymmetric.} We further show that $f$ which solves the linear approximating equation is axially symmetric, so is $f^{\varepsilon}$ by the above procedure. To this end, we claim that $f(Tv)$ satisfies the linear system, where $T = T_{\theta}$ be a $3 \times 3$ rotation matrix such that $$T := \begin{pmatrix} \cos \theta & -\sin \theta & 0 \\
\sin \theta & \cos \theta & 0 \\
0 & 0 & 1 \end{pmatrix}, \qquad \theta \in [0,2\pi).$$ We observe that\begin{equation*}
	\begin{gathered}
		\varepsilon (\Delta f)(Tv)  
		= \varepsilon\Delta (f(Tv)) , \text{ for }\theta \in [0,2\pi).
	\end{gathered}
\end{equation*} In addition, one can show that $$Q^{\varepsilon}(g,f)(Tv) = Q^{\varepsilon}(g,f(Tv))(v)$$ as follows. In the rest of this subsection, we abbreviate and remove the notation $\varepsilon.$ 
Note that
\begin{equation}
\begin{gathered}\label{QT1}
    Q(g,f)(Tv) = \int_{\mathbb{R}^3} \partial_i a_{ij}(Tv-v_*) \left( g(v_*)(\partial_jf)(Tv) - f(Tv)\partial_jg(v_*) \right) \mathrm{d} v_* \\
    + \int_{\mathbb{R}^3} a_{ij}(Tv-v_*) \left( g(v_*)(\partial_i \partial_jf)(Tv) - (\partial_i f)(Tv)\partial_jg(v_*) \right) \mathrm{d} v_*
\end{gathered}
\end{equation} and 
\begin{equation}
\begin{gathered}\label{QT2}
    Q(g,f(Tv))(v) = \int_{\mathbb{R}^3} \partial_i a_{ij}(v-v_*) \left( g(v_*)\partial_j(f(Tv)) - f(Tv)\partial_jg(v_*) \right) \mathrm{d} v_* \\
    + \int_{\mathbb{R}^3} a_{ij}(v-v_*) \left( g(v_*)\partial_i \partial_j(f(Tv)) - \partial_i (f(Tv))\partial_jg(v_*) \right) \mathrm{d} v_*.
\end{gathered}
\end{equation} We first show that the first term of $Q(g,f)(Tv)$ in \eqref{QT1} and that of $Q(g,f(Tv))(v)$ in \eqref{QT2} are the same. Due to a special structure of the matrix $a$, we can see $(\nabla \cdot a)(Tz) = T \nabla \cdot a(z)$ for all $z \in \mathbb{R}^3$. Combining with the change of variables, we have
\begin{equation*}
\begin{gathered}
    \int_{\mathbb{R}^3} (\nabla \cdot a)(Tv-v_*) \cdot \left( g(v_*)(\nabla f)(Tv) - f(Tv)(\nabla g)(v_*) \right) \mathrm{d} v_* \\
    = \int_{\mathbb{R}^3} T \nabla \cdot a(v-v_*) \cdot \left( g(v_*)(\nabla f)(Tv) - f(Tv)(\nabla g)(Tv_*) \right) \mathrm{d} v_* \\
    = \int_{\mathbb{R}^3} \nabla \cdot a(v-v_*) \cdot T^{\top} \left( g(v_*)(\nabla f)(Tv) - f(Tv)(\nabla g)(Tv_*) \right) \mathrm{d} v_*.
\end{gathered}
\end{equation*} Using $T^{\top} = T^{-1}$ and
\begin{equation}\label{chain_2}
    \nabla (h(Mv)) = M^{\top} (\nabla h)(Mv), \qquad M \in \mathcal{M}_{3 \times 3}(\mathbb{R}),
\end{equation} it follows
\begin{equation*}
\begin{gathered}
    \int_{\mathbb{R}^3} \nabla \cdot a(v-v_*) \cdot T^{\top} \left( g(v_*)(\nabla f)(Tv) - f(Tv)(\nabla g)(Tv_*) \right) \mathrm{d} v_* \\
    = \int_{\mathbb{R}^3} \nabla \cdot a(v-v_*) \cdot \left( g(v_*)\nabla (f(Tv)) - f(Tv)\nabla (g(Tv_*)) \right) \mathrm{d} v_* \\
    = \int_{\mathbb{R}^3} \nabla \cdot a(v-v_*) \cdot \left( g(v_*)\nabla (f(Tv)) - f(Tv) \nabla g(v_*) \right) \mathrm{d} v_*.
\end{gathered}
\end{equation*} Now we claim
\begin{equation}\label{sym_est_front}
    \int_{\mathbb{R}^3} a_{ij}(Tv-v_*) g(v_*)(\partial_i \partial_jf)(Tv) \mathrm{d} v_* = \int_{\mathbb{R}^3} a_{ij}(v-v_*) g(v_*)\partial_i \partial_j(f(Tv)) \mathrm{d} v_*
\end{equation} and
\begin{equation}\label{sym_est_back}
    \int_{\mathbb{R}^3} a_{ij}(Tv-v_*) (\partial_i f)(Tv)\partial_jg(v_*) \mathrm{d} v_* = \int_{\mathbb{R}^3} a_{ij}(v-v_*) \partial_i (f(Tv))\partial_jg(v_*) \mathrm{d} v_*.
\end{equation} 

We prove \eqref{sym_est_front} first. Since the radial symmetric case is shown in \cite[Proposition~3.1]{MR3599518}, it should hold that
\begin{gather*}
    \sum_{j=1}^2 \sum_{i=1}^2\int_{\mathbb{R}^3} a_{ij}(Tv-v_*) g(v_*)(\partial_i \partial_jf)(Tv) \mathrm{d} v_* \\
    = \sum_{j=1}^2 \sum_{i=1}^2 \int_{\mathbb{R}^3} a_{ij}(v-v_*) g(v_*)\partial_i \partial_j(f(Tv)) \mathrm{d} v_*.
\end{gather*} The equality holds for $i=j=3$ clearly. Thus, it remains to show that 
\begin{multline*}
    \sum_{i=1}^2\int_{\mathbb{R}^3} a_{i3}(Tv-v_*) g(v_*)(\partial_i \partial_3f)(Tv) \mathrm{d} v_*\\ = \sum_{i=1}^2 \int_{\mathbb{R}^3} a_{i3}(v-v_*) g(v_*)\partial_i \partial_3(f(Tv)) \mathrm{d} v_*.
\end{multline*}
For simplicity, let $z = v-v_*$. Then, direct computation yields that 
\begin{gather*}
    \sum_{i=1}^2 a_{i3}(Tz) (\partial_i \partial_3f)(Tv) = -z_1z_3 |z|^{\gamma} \cos \theta  (\partial_1 \partial_3f)(Tv) + z_2z_3 |z|^{\gamma} \sin \theta  (\partial_2 \partial_3f)(Tv) \\
    - z_1z_3 |z|^{\gamma} \sin \theta  (\partial_1 \partial_3f)(Tv) - z_2z_3 |z|^{\gamma} \cos \theta  (\partial_2 \partial_3f)(Tv) = \sum_{i=1}^2 a_{i3}(z) \partial_i \partial_3(f(Tv)).
\end{gather*} Thus, \eqref{sym_est_front} is obtained from the calculation above and the change of variables. 

To prove \eqref{sym_est_back}, it suffices to see that $Tz \otimes Tz = T(z \otimes z)T^{\top}$ for all $z \in \mathbb{R}^3$. This can be shown by the simply relationship $$T(z \otimes z) T^{\top} = (Tz \otimes z)T^{\top} = (T (z \otimes Tz))^{\top} = (Tz \otimes Tz)^{\top} = Tz \otimes Tz. $$ Then, it follows $a(Tz) = T a(z) T^{\top}$, and we can have 
\begin{align*}
    a_{ij}(T(v-v_*)) (\partial_i f)(Tv)(\partial_jg)(Tv_*) &= (\nabla f)(Tv)^{\top} a(T(v-v_*)) (\nabla g)(Tv_*) \\
    &= \nabla(f(Tv))^{\top} T^{\top} a(T(v-v_*)) T \nabla(g(Tv_*)) \\
    &=T^{\top} a(T(v-v_*)) T \nabla g(v_*) \cdot \nabla(f(Tv)) \\
    &= a_{ij}(v-v_*) \partial_i(f(Tv)) \partial_j g(v_*).
\end{align*} This implies \eqref{sym_est_back}.

Therefore, we obtain a (unique) smooth axisymmetric solution $f^{\varepsilon}.$

\subsection{Uniform-in-$\varepsilon$ a priori estimates and passing to the limit $\varepsilon\to 0$}\label{sec.unif.epsilon}
We now derive uniform bounds for moments of the solution using test functions of the form 
 $(1 + |v|^2)^{p/2}$ for $p \ge  2$. These estimates are independent of 
the regularization parameter $\varepsilon$ and are essential for compactness. 
We first have from that the regularized system \eqref{eq.reg eq} is the divergence form $$M(f_t^{\varepsilon}) = M(f_0^{\varepsilon})=M(f^n_0)$$ for all $t \geq 0$. Multiplying both terms of \eqref{eq.reg eq} by $|v|^2$ and integrating over $\mathbb{R}^3$, we have \begin{equation*}
		\begin{gathered}
			\frac {d}{dt} \int_{\mathbb{R}^3} |v|^2 f^{\varepsilon}(t) \,\mathrm{d}v 
			= \int_{\mathbb{R}^3} |v|^2 Q^\varepsilon(f^\varepsilon, f^\varepsilon) \,\mathrm{d}v + \varepsilon\int_{\mathbb{R}^3} |v|^2 \Delta f^\varepsilon \,\mathrm{d}v .
		\end{gathered}
	\end{equation*}
Note from \eqref{def_bar} that the first integral on the right-hand side is equal to \begin{multline*}\int_{\mathbb{R}^3} |v|^2 \partial_i \left( \overline{a_{ij}}^{\varepsilon,f^{\varepsilon}} \partial_j f^{\varepsilon} - \overline{b}_{i}^{\varepsilon,f^{\varepsilon}} f^{\varepsilon} \right) \mathrm{d} v = \int_{\mathbb{R}^3} \overline{a_{ij}}^{\varepsilon,f^{\varepsilon}} \delta_{ij} f^{\varepsilon} \,\mathrm{d} v +2 \int_{\mathbb{R}^3} v_i \overline{b}_{i}^{\varepsilon,f^{\varepsilon}} f^{\varepsilon} \mathrm{d} v \\
			= 2 \int_{\mathbb{R}^3} v_i \left( (a_{ij}\partial_j \mu_\varepsilon) * f^{\varepsilon} \right) f^{\varepsilon} \mathrm{d} v = 2 \int_{\mathbb{R}^3} \int_{\mathbb{R}^3} v_i (a_{ij}\partial_j \mu_\varepsilon)(v-v_*) f^{\varepsilon}(v_*) f^{\varepsilon}(v) \,\mathrm{d}v_* \mathrm{d} v \\= 0,
		\end{multline*} where we used \eqref{def.muep} with $(|v|^2\delta_{ij} - v_iv_j)v_j = 0$ in the last equality. On the other hand, the remainder term is bounded by $C\varepsilon M(f_0^{\varepsilon})$ for some $C>0$ not depending on $\varepsilon$ via the integration by parts. Thus, we obtain $$\frac {d}{dt} \int_{\mathbb{R}^3} |v|^2 f^{\varepsilon}(t) \,\mathrm{d}v \leq C\varepsilon M(f^n_0),$$ and \begin{equation}\label{E_est} E(f^{\varepsilon}_t) \leq E(f^n_0) + tC\varepsilon  M(f^n_0)\end{equation} on the interval $[0,T]$ for any $T>0$. Moreover, $$\frac {d}{dt} \int_{\mathbb{R}^3} f^{\varepsilon}(t) \log f^{\varepsilon}(t) \,\mathrm{d}v \leq 0, $$ and thus, 
	\begin{equation}\label{H_est} \text{H}(f^{\varepsilon}_t) \leq \text{H}(f^n_0) \end{equation} holds for all $t \in [0,T]$ and $\varepsilon > 0$. From \eqref{E_est} and \eqref{H_est}, we deduce that $\{ f^{\varepsilon}_t \}_{\varepsilon > 0}$ is weakly compact in $L^1(\mathbb{R}^3_v)$ for each $t\in[0,T]$ by satisfying the Dunford-Pettis compactness criterion. Moreover, we can have from \eqref{eq.reg eq} the moments propagation properties: $$M_s(f^{\varepsilon}_t) \leq C(M_s(f^n_0))$$ for all $t \in [0,T]$ and $s > 0$.  This implies uniform equicontinuity in $t$; for any $\varphi \in C^2_{b,1}(\mathbb{R}^3)$, i.e., $C^2$ function with $\| \varphi \|_{L^{\infty}} + \| \nabla \varphi \|_{L^{\infty}} + \| \nabla^2 \varphi \|_{L^{\infty}} \leq 1,$  we have\begin{equation}
		\begin{aligned}
			&\int_{\mathbb{R}^3} \varphi (f^{\varepsilon}_t - f^{\varepsilon}_{t'}) \,\mathrm{d}v \\
			&\hphantom{\qquad}\leq C\int_{t'}^t \int_{\mathbb{R}^3} \int_{\mathbb{R}^3} (1+|v|^{2+\gamma} + |v_*|^{2+\gamma}) f^{\varepsilon}_{\tau}(v) f^{\varepsilon}_{\tau}(v_*) \,\mathrm{d}v_* \mathrm{d}v \mathrm{d}\tau + (t-t')\varepsilon M(f^{\varepsilon}_0) \\
			&\hphantom{\qquad}\leq C(t-t').
		\end{aligned}
	\end{equation} Therefore, by the Arzelà-Ascoli theorem, we can deduce that there exists a subsequence $\{ f^{\varepsilon'} \}_{\varepsilon' > 0} \subset \{ f^{\varepsilon} \}_{\varepsilon > 0}$ and a limit $f^n \in L^{\infty}(0,T;L^1_{s} \cap L \log L(\mathbb{R}^3))$ which is a weak solution to \eqref{eq:L} with the initial data $f^n_0$ defined in \eqref{initial}. We note that this solution $f^n$ satisfies all conditions for Proposition \ref{prop.1.3}, and thus by the ellipticity of $\overline{a_{ij}(v)}$, the smoothness of the initial data $f^n_0$,
     and that $f^n$ satisfies the Gaussian upper-bound \eqref{pi_2}, we conclude that $f^n \in C^{\infty}((0,\infty);\mathcal{S}(\mathbb{R}^3))$ by the smoothing effect. For the proof of smoothing effects, see the proof of Theorem 5 of \cite[page 227-237]{MR1737547}.
	
\subsection{Axi-symmetry Preservation}\label{sec.axisym}
To show that the solution $f^n$ remains axisymmetric, we verify that the collision operator $Q(f^n, f^n)$ preserves the axisymmetric structure. For an axisymmetric test function $\phi(v) = \phi(T_\theta v)$, the rotational invariance of $a_{ij}(v - v_*)$ implies:
\[
\int_{\mathbb{R}^3} Q(f^n, f^n) \phi dv = \int_{\mathbb{R}^3} Q( f^n(T_\theta v), f^n(T_\theta v)) \phi dv,
\]
where $T_\theta$ denotes the rotation operator around the axis of symmetry. Since the initial data is axisymmetric, it follows that $f^n(t, v) = f^n(t, T_\theta v)$ for all $t > 0$.

\subsection{Passing to the limit as $n \to \infty$}
Given the solution $f^n \in C^{\infty}((0,\infty);\mathcal{S}(\mathbb{R}^3))$ to the Landau equation \eqref{eq:L} for each smooth compactly supported initial data $f^n_0$ defined in \eqref{initial}, we pass to the limit as $n\to \infty. $ We will prove that there exists a limit $f \in C((0,\infty);\mathcal{P}(\mathbb{R}^3))$ which solves \eqref{eq:L} with the initial data $f_0\in \mathcal{P}_p(\mathbb{R}^3)$ for $p\ge 2$ in the weak sense (Definition \ref{weak def}). To this end, we have to prove the sufficient conditions for the use of Arzela-Ascoli theorem. We introduce a brief outline based on the proof of Theorem 3 of \cite{FOURNIER20211961}. Note that our initial data \eqref{initial} are mollified version of the cutoff initial data $\alpha_n^{-1} \mathbf{1}_{|v| \leq n} f_0(dv)$ of \cite{FOURNIER20211961}, which makes \eqref{initial} smoother.

For $p>2,$ we have for any $T>0,$ there exists a constant $C_T>0$ such that for any $2 < s \leq p$
\begin{equation}
    \label{gain of moment 1}\sup_{t\in[0,T]}	M_s(f^n(t)) +  \int_0^T M_{s+\gamma}(f^n(\tau)) \mathrm{d}\tau \leq C_T,
\end{equation}	by \eqref{gain_p}. On the other hand, if $p=2, $ we use the de La Vall\'ee Poussin theorem and guarantee the existence of a $C^2$ function $h:[0,\infty)\to [0,\infty)$ such that $h''\in [0,1],$ $h'(0)=1$, $h'(\infty)=\infty$ and $\int_{\mathbb{R}^3}h(|v|^2)f_0(dv)<\infty.$ Then by \cite[Eq. (29)]{FOURNIER20211961}, there exists a finite constant $K_T$ for any $T>0$ such that for all $n\ge n_0,$ 
\begin{equation}
    \label{gain of moment 2}\sup_{t\in[0,T]}	\int_{\mathbb{R}^3}h(|v|^2)f^n_t(dv) +  \int_0^T \int_{\mathbb{R}^3}|v|^{2+\gamma}h'(|v|^2)f^n_t(dv)  \mathrm{d}t \leq K_T.
\end{equation}	

Then using the moment estimates, now we show that for each $t\ge 0,$ the family $\{f_t^n\}_{n\ge n_0}$ is relatively compact in $\mathcal{P}(\mathbb{R}^3). $   We note that, for each $n\ge n_0$, $$E(f^{n}_0) = \int_{\mathbb{R}^3} |v|^2 (\alpha_n^{-1} \mathbf{1}_{|v| \leq n} f_0(dv)) * \sigma_{n} = \int_{\mathbb{R}^3} (|v|^2 * \sigma_{n}) \alpha_n^{-1} \mathbf{1}_{|v| \leq n} f_0(dv),$$ and hence by the dominate convergence theorem, we obtain the last integral converges to $E(f_0)$.  Then using the energy conservation \eqref{energy con}, we have for each $n\ge n_0$ $$E(f^n_t)\le E(f^n_0)\le E(f_0)=:E_0. $$ Also, since the set $\{f\in \mathcal{P}(\mathbb{R}^3); E(f)\le E_0\}$ is compact for any $E_0>0$, we conclude that $\{f_t^n\}_{n\ge n_0}$ is relatively compact in $\mathcal{P}(\mathbb{R}^3). $ 

Then we can also prove that $\{f^n_t\}_{n\ge n_0}$ is equicontinuous in time; namely, for the weak distance $d_{\text{weak}}$ which is defined as in \cite{FOURNIER20211961}
$$d_{\text{weak}}(f,g)\eqdef \sup_{\varphi\in C^2_{b,1}}\left|\int_{\mathbb{R}^3}\varphi(v)(f-g)(dv)\right|, $$ we prove that $$\lim_{\epsilon\to 0}\sup_{n\ge n_0}\sup_{s,t\in[0,T],|t-s|\le \epsilon}d_{\text{weak}} (f^n_t,f^n_s)=0.$$ Here $C^2_{b,1}(\mathbb{R}^3)$ is defined as $$C^2_{b,1}(\mathbb{R}^3)=\{\varphi\in C^2(\mathbb{R}^3):\| \varphi \|_{L^{\infty}} + \| \nabla \varphi \|_{L^{\infty}} + \| \nabla^2 \varphi \|_{L^{\infty}} \leq 1\}.$$ From the weak formulation in \eqref{weak def} with the estimates that $|a_{ij}(z)|\le |z|^{2+\gamma}$ and $|b_i(z)|\le |z|^{1+\gamma},$ we obtain 
$$d_{\text{weak}} (f^n_t,f^n_s)\lesssim \int_s^t  \int_{\mathbb{R}^3}(1+|v|^{\gamma+2})f^n_\tau(dv)d\tau.$$ Splitting the case with $|v|\le A$ and $|v|\ge A$ and using \eqref{gain of moment 1} for $p>2$ case and \eqref{gain of moment 2} for $p=2$, we obtain
\begin{equation}d_{\text{weak}} (f^n_t,f^n_s)\lesssim\begin{cases}
    &(1+A^{\gamma+2})\epsilon + \frac{C_T}{A^{s+\gamma-2}},\text{ for }p>2,\\
    &(1+A^{\gamma+2})\epsilon + \frac{K_T}{h'(A^2)},\text{ for }p=2.
\end{cases}\end{equation} For both cases, for any given small $\eta>0, $ we choose $A$ sufficiently large such that 
$\frac{C_T}{A^{s+\gamma-2}},\frac{K_T}{h'(A^2)}\ll \frac{\eta}{2}.$ Then choosing $\epsilon$ sufficiently small such that $(1+A^{\gamma+2})\epsilon \ll \frac{\eta}{2}$, we can make $d_{\text{weak}} (f^n_t,f^n_s)<\eta$ for any sufficiently small $\eta>0.$ Therefore, by the Arzela-Ascoli theorem, we obtain that there exists a subsequence of $\{f^n_t\}_{n\ge n_0}$ which converges weakly to a limit $f\in C([0,\infty);\mathcal{P}(\mathbb{R}^3))$. This $f$ satisfies $E(f_t)\le E(f_0)$. Moreover, we have from \eqref{gain_p} and \eqref{t1t2} that $f\in L^{\infty}((0,\infty);\mathcal{P}_{p}(\mathbb{R}^3)) \cap L^{1}_{loc}((0,\infty);\mathcal{P}_{p+\gamma}(\mathbb{R}^3))$. One can easily check that this limit is indeed a weak solution to the Landau equation \eqref{eq:L} satisfying the weak formulation (Definition \ref{weak def}), since we can pass to the limit $n\to \infty$ in not just the linear terms but also the nonlinear terms by adding and subtracting $a_{ij}\partial_i\partial_j\varphi f^n_tf_t$ (and similarly $b_i\partial_i\varphi f^n_tf_t$) in the integrand of the nonlinear integrals and use the uniform $L^1$-moment bounds $E(f^n_t)\le E(f_0) $ and the fact that we test against $C^2_{b,1}$ functions. Lastly, our solution $f$ is also axisymmetric due to the same reasoning in Section \ref{sec.axisym}. This completes the proof of an axisymmetric measure-valued solution $f\in C([0,\infty);\mathcal{P}(\mathbb{R}^3))$ to the Landau equation \eqref{eq:L}.
\subsection{Analytic Regularity of Solutions}Indeed, our axisymmetric solution $f$ is analytic, thanks to Chen-Li-Xu's work (Theorem \ref{analytic theorem}) which states that any smooth solution with uniform $H^k$ bounds for all $k\in\mathbb{N}$ in the hard potential case is analytic for all $t>0.$ Indeed, as long as our solution (and hence the initial datum) is not a Dirac mass, then by Lemma \ref{lem_line}, we have the instantaneous spanning of the support escaping a line support. Then by the axi-symmetry of solution, we have the support instantaneously containing a triangle. 
\subsubsection{Case with $p>2$}Then in the case when $p>2$, we follow Desvillettes-Villani's work (Theorem \ref{Villani theorem}) on the support containing three balls in \cite{MR1737547} in the hard potential case and show that our axisymmetric solution is indeed bounded in $H^k$ for any $k\in \mathbb{N}.$ Then by Theorem \ref{analytic theorem} (or Theorem \ref{fournier analytic} by Fournier-Heydecker), we conclude that our solution is indeed analytic in the case $p>2$. 
\subsubsection{Case with $p=2$}
If $p=2,$ then we follow the proof of Theorem \ref{fournier analytic} in \cite[Theorem 5]{FOURNIER20211961}. For the self-containedness, we sketch the proof here. Namely, for $f_0\in \mathcal{P}_2(\mathbb{R}^3)$ axisymmetric and not being a Dirac mass, let $f_t$ be any of corresponding axisymmetric weak solutions starting at $f_0.$ Then by Theorem \ref{Villani theorem}, for a fixed $t_0>0,$ we have $M_{2+\delta}(f_{t_1})<\infty$ for any arbitrary $t_1\in (0,t_0).$ $f_{t_1}$ is also not a Dirac mass due to the conservation of energy and momentum. Then by Lemma \ref{lem_line}, we can find $t_2\in [t_1,t_0)$ such that $f_{t_2}$ is not concentrated on a line and $M_{2+\delta}(f_{t_2})<\infty$. Then by Theorem \ref{Villani theorem} there exists a weak solution $g_t$ with the initial datum $g_0\eqdef f_{t_2} $ such that $\sup_{t\ge \epsilon}\|g_t\|_{H^k(\mathbb{R}^3)}<\infty$ for any $k\in\mathbb{N}$ and $\epsilon>0.$ Since $f_{t_2}$ is axisymmetric, the solution $g$ is also axisymmetric. By Theorem \ref{analytic theorem}, $g_t$ is analytic for all $t>0.$ Finally, by the uniqueness theorem (Theorem \ref{uniq.thm}) for the solutions with $\mathcal{P}_{2+\delta}$ initial data, $f_{t_0}=g_{t_0-t_2}$ is analytic. This completes the proof.

\appendix
\section{Proof of the gain of moments for hard potentials}\label{app.gain of M} The following proof is from \cite[Section 3]{MR1737547}. Here we introduce a sketch of it.
From the Landau equation \eqref{eq:L}, we have for $s>2$ that
	\begin{equation*}
	\begin{gathered}
		\frac {d}{dt} M_s(f(t)) = \int_{\mathbb{R}^3} \overline{a_{ij}(v)} f(v) \partial_i \partial_j (1+|v|^2)^{\frac {s}{2}} \,\mathrm{d}v \\
		+ 2 \int_{\mathbb{R}^3} \overline{b_i(v)} f(v) \partial_i (1+|v|^2)^{\frac {s}{2}} \mathrm{d}v \\
		= s\int_{\mathbb{R}^3} \overline{a_{ii}(v)} f(v) (1+|v|^2)^{\frac {s-2}{2}} \,\mathrm{d}v  \quad (=:I)\\
		+ s(s-2) \int_{\mathbb{R}^3} \overline{a_{ij}(v)} f(v) v_i v_j (1+|v|^2)^{\frac {s-4}{2}} \,\mathrm{d}v \quad (=: II)\\
		+ 2s \int_{\mathbb{R}^3} \overline{b_i(v)} f(v) v_i (1+|v|^2)^{\frac {s-2}{2}} \mathrm{d}v \quad (=: III).
	\end{gathered}
\end{equation*} Here we used $\partial_i^2 (1+|v|^2)^{\frac {s}{2}} = s(1+|v|^2)^{\frac {s-2}{2}} + s(s-2) v_i^2 (1+|v|^2)^{\frac {s-4}{2}}$.
	We now recall the definition of $\overline{a_{ij}}$ and $\overline{b_i}$:
	$$\overline{a_{ij}(v)} = \int_{\mathbb{R}^3} \left( \delta_{ij} - \frac {(v-v_*)_i(v-v_*)_j}{|v-v_*|^2} \right)|v-v_*|^{2+\gamma} f(v_*) \,\mathrm{d}v_*,$$
	$$\overline{a_{ii}(v)} = 2\int_{\mathbb{R}^3} |v-v_*|^{2+\gamma} f(v_*) \,\mathrm{d}v_*,$$
	\begin{equation*}
	\begin{aligned}
		\overline{b_{i}(v)} &= -2 \int_{\mathbb{R}^3} (v-v_*)_i|v-v_*|^{\gamma} f(v_*) \,\mathrm{d}v_* \\
		&= -2 \int_{\mathbb{R}^3} v_i |v-v_*|^{\gamma} f(v_*) \,\mathrm{d}v_* + 2 \int_{\mathbb{R}^3} {v_*}_i |v-v_*|^{\gamma} f(v_*) \,\mathrm{d}v_* .
	\end{aligned}
\end{equation*}
Then we can have
	\begin{equation*}
	\begin{gathered}
		I = 2s\int_{\mathbb{R}^3} \int_{\mathbb{R}^3} |v-v_*|^{2+\gamma} (1+|v|^2)^{\frac {s-2}{2}} f(v) f(v_*) \,\mathrm{d}v_* \mathrm{d}v \\
		= 2s\int_{\mathbb{R}^3} \int_{\mathbb{R}^3} |v-v_*|^{\gamma} (|v|^2 - 2v \cdot v_* + |v_*|^2) (1+|v|^2)^{\frac {s-2}{2}} f(v) f(v_*) \,\mathrm{d}v_* \mathrm{d}v,
	\end{gathered}
\end{equation*} and
	\begin{equation*}
	\begin{gathered}
		III = -4s\int_{\mathbb{R}^3} \int_{\mathbb{R}^3} |v-v_*|^{\gamma}(v-v_*)_iv_i (1+|v|^2)^{\frac {s-2}{2}} f(v) f(v_*) \,\mathrm{d}v_* \mathrm{d}v \\
		= -4s \int_{\mathbb{R}^3} \int_{\mathbb{R}^3} |v-v_*|^{\gamma} |v|^2 (1+|v|^2)^{\frac {s-2}{2}} f(v) f(v_*) \,\mathrm{d}v_* \mathrm{d}v \\
		+ 4s\int_{\mathbb{R}^3} \int_{\mathbb{R}^3} |v-v_*|^{\gamma} v \cdot v_* (1+|v|^2)^{\frac {s-2}{2}} f(v) f(v_*) \,\mathrm{d}v_* \mathrm{d}v,
	\end{gathered}
\end{equation*} thus,
		\begin{equation*}
	\begin{gathered}
		I + III = -2s \int_{\mathbb{R}^3} \int_{\mathbb{R}^3} |v-v_*|^{\gamma} |v|^2 (1+|v|^2)^{\frac {s-2}{2}} f(v) f(v_*) \,\mathrm{d}v_* \mathrm{d}v \\
		+ 2s \int_{\mathbb{R}^3} \int_{\mathbb{R}^3} |v-v_*|^{\gamma} |v_*|^2 (1+|v|^2)^{\frac {s-2}{2}} f(v) f(v_*) \,\mathrm{d}v_* \mathrm{d}v.
	\end{gathered}
\end{equation*}
	On the other hand, using $$\delta_{ij} |v-v_*|^2 v_iv_j - (v-v_*)_i(v-v_*)_jv_iv_j = |v|^2|v_*|^2 - (v \cdot v_*)^2,$$ we can have
	\begin{equation*}
	\begin{gathered}
		II = s(s-2) \int_{\mathbb{R}^3} \int_{\mathbb{R}^3} (\delta_{ij}|v-v_*|^2 - (v-v_*)_i(v-v_*)_j)v_i v_j \\
		|v-v_*|^{\gamma} (1+|v|^2)^{\frac {s-4}{2}} f(v) f(v_*) \,\mathrm{d}v \\
		= s(s-2) \int_{\mathbb{R}^3} \int_{\mathbb{R}^3} (|v|^2|v_*|^2 - (v \cdot v_*)^2) |v-v_*|^{\gamma} (1+|v|^2)^{\frac {s-4}{2}} f(v) f(v_*) \,\mathrm{d}v \\
		= s(s-2) \int_{\mathbb{R}^3} \int_{\mathbb{R}^3} \frac{|v|^2|v_*|^2 - (v \cdot v_*)^2}{1+|v|^2} |v-v_*|^{\gamma} (1+|v|^2)^{\frac {s-2}{2}} f(v) f(v_*) \,\mathrm{d}v \\
		\leq s(s-2) \int_{\mathbb{R}^3} \int_{\mathbb{R}^3} |v_*|^2 |v-v_*|^{\gamma} (1+|v|^2)^{\frac {s-2}{2}} f(v) f(v_*) \,\mathrm{d}v.
	\end{gathered}
\end{equation*}
	Combining the above, we have
	\begin{equation*}
	\begin{gathered}
		I + II + III \leq -2s \int_{\mathbb{R}^3} \int_{\mathbb{R}^3} |v-v_*|^{\gamma} |v|^2 (1+|v|^2)^{\frac {s-2}{2}} f(v) f(v_*) \,\mathrm{d}v_* \mathrm{d}v \\
		+ s^2 \int_{\mathbb{R}^3} \int_{\mathbb{R}^3} |v-v_*|^{\gamma} |v_*|^2 (1+|v|^2)^{\frac {s-2}{2}} f(v) f(v_*) \,\mathrm{d}v_* \mathrm{d}v.
	\end{gathered}
\end{equation*} Using $s>2$ and symmetry give
\begin{equation*}
	\begin{gathered}
		\frac {d}{dt} M_s(f(t)) \leq s\int_{\mathbb{R}^3} \int_{\mathbb{R}^3} |v-v_*|^{\gamma} (-(1+|v|^2)^{\frac {s}{2}} - (1+|v_*|^2)^{\frac {s}{2}} \\
		+ \frac {s}{2} (1+|v|^2) (1+|v_*|^2)^{\frac {s-2}{2}} + \frac {s}{2} (1+|v_*|^2) (1+|v|^2)^{\frac {s-2}{2}}) f(v) f(v_*) \,\mathrm{d}v_* \mathrm{d}v.
	\end{gathered}
\end{equation*}
	Note that there exists $K>0$ and $C>0$ depending only on $s>2$ such that
	\begin{equation*}
	\begin{aligned}
		-\xi^s - \xi_*^s + \frac {s}{2} \xi^2\xi_*^{s-2} + \frac {s}{2} \xi_*^2 \xi^{s-2} \leq -K \xi^s + C(\xi\xi_*^{s-1} + \xi_* \xi^{s-1}).
	\end{aligned}
\end{equation*}
	Then,
	\begin{equation*}
	\begin{gathered}
		\frac {d}{dy} M_s(f(t)) \leq -K \int_{\mathbb{R}^3} \int_{\mathbb{R}^3} |v-v_*|^{\gamma} (1+|v|^2)^{\frac {s}{2}}  f(v) f(v_*) \,\mathrm{d}v_* \mathrm{d}v \\
		+ C \int_{\mathbb{R}^3} \int_{\mathbb{R}^3} |v-v_*|^{\gamma} ((1+|v|^2)^{\frac {1}{2}} (1+|v_*|^2)^{\frac {s-1}{2}} + (1+|v_*|^2)^{\frac {1}{2}} (1+|v|^2)^{\frac {s-1}{2}}) \\
		f(v) f(v_*) \,\mathrm{d}v_* \mathrm{d}v.
	\end{gathered}
\end{equation*}
With $-|v-v_*|^{\gamma} \leq -(1+|v|^2)^{\frac{\gamma}{2}} + C(1+|v_*|^2)^{\frac{\gamma}{2}}$, we have
\begin{equation*}
	\begin{gathered}
		\frac {d}{dt} M_s(f) \leq -K M(f_0) M_{s+\gamma}(f) + CK M_\gamma(f) M_s(f) \\
		+ CM_{\gamma+1}(f) M_{s-1}(f) + CM_1(f) M_{s+\gamma-1}(f).
	\end{gathered}
\end{equation*} Using interpolation inequalities and Young's inequality, we have 
\begin{equation}\label{gain}
	\frac {d}{dt} M_s(f) \leq - \frac {1}{2} K M(f_0) M_{s+\gamma}(f) + C(M(f_0) + E(f_0)) M_s(f)
\end{equation} and
	\begin{equation}\label{gain_p}
	\begin{aligned}
		M_s(f(t)) + K' \int_0^t M_{s+\gamma}(f(\tau)) \mathrm{d}\tau \leq C + C e^{Ct}.
	\end{aligned}
\end{equation}
	On the other hand, inserting
	\begin{equation*}
	\begin{aligned}
		M_{s}(f) \leq M(f_0)^{\frac {\gamma}{s+\gamma}} M_{s+\gamma}(f)^{\frac {s}{s+\gamma}}
	\end{aligned}
\end{equation*} into \eqref{gain} gives $$\frac {d}{dt} M_s(f) \leq - \frac {1}{2} K M(f_0)^{1-\frac {\gamma}{s}} M_{s}(f)^{\frac {s+\gamma}{s}} + C(M(f_0) + E(f_0)) M_s(f).$$
Therefore, there exists $C>0$ depending on $s$, $\gamma$, $M(f_0)$, and $E(f_0)$ such that
\begin{equation}\label{t1t2}
	M_s(f(t)) \leq \max \{ C, M_s(f(t_0)) \}, \qquad t \geq t_0.
\end{equation}
%

\bibliographystyle{amsplain3links}
\bibliography{bibliography.bib}{}



\end{document}